\theoremstyle{plain}
\newtheorem{thm}{Theorem}[section] %here!! Section or subsection, about the counter!!!
\newtheorem{defn}[thm]{Definition}
\newtheorem{theorem}[thm]{Theorem}
\newtheorem{question}[thm]{Question}
\newtheorem{prop}[thm]{Proposition}
\newtheorem{lemma}[thm]{Lemma}
\newtheorem{cor}[thm]{Corollary}
\theoremstyle{definition}
\def \gs {\mathfrak S}
\def \sfi {\mathrm{Mod}_{\gs}^{\varphi, r}}
\def \hR {{\widehat{\mathcal R} }}
\def \hM {{\hat \M}}
\def \O {\mathcal O}
\def \E {\mathcal E}
\def \t {\mathrm}
\def \FrR {\mathrm{Fr} R}
\def \inj {\hookrightarrow }
\newcommand{\M}{\mathfrak{M}}
\newcommand{\Zp}{\mathbb{Z}_p}
\newcommand{\Qp}{\mathbb{Q}_p}
\newcommand{\Z}{\mathbb{Z}}
\newcommand{\N}{\mathfrak{N}}
\newcommand{\mfc}{\mathfrak c}
\newcommand{\mfS}{\mathfrak S}
\def \ito {\overset  \sim  \to}
\def \onto {\twoheadrightarrow}
\def \cT {\mathcal T}
\def \upi {\underline \pi }
\DeclareMathOperator{\Fr}{Fr}
\DeclareMathOperator{\Gal}{Gal}
\DeclareMathOperator{\gal}{Gal}
\DeclareMathOperator{\Ker}{Ker}
\DeclareMathOperator{\Mat}{Mat}
\DeclareMathOperator{\Rep}{Rep}
\newcommand{\cris}{\mathrm{cris}}
\newcommand{\st}{\mathrm{st}}
\newcommand{\ur}{\mathrm{ur}}
\newcommand{\tor}{\mathrm{tor}}
\newcommand{\fr}{\mathrm{fr}}
\newcommand{\huaS}{\mathfrak{S}}
\newcommand{\huaM}{\mathfrak{M}}
\newcommand{\huat}{\mathfrak{t}}
\newcommand{\mhat}{\hat{\huaM}}
\newcommand{\wt}{\widetilde}
\title{Limit of torsion semi-stable Galois representations with unbounded weights}
\author{HUI GAO}
\address{Department of Mathematics and Statistics, University of Helsinki, FI-00014, Finland}
\email{hui.gao@helsinki.fi}
\subjclass[2010]{Primary 11F80, 11F33}
\keywords{torsion Kisin modules, semi-stable representations}
\begin{document}

\begin{abstract}
Let $K$ be a complete discrete valuation field of characteristic $(0, p)$ with perfect residue field, and let $T$
be an integral $\Zp$-representation of $\Gal(\overline{K}/K)$. A theorem of T. Liu says that if $T/p^n T$ is torsion semi-stable (resp. crystalline) of \emph{uniformly bounded} Hodge-Tate weights for all $n \geq 1$, then $T$ is also semi-stable (resp. crystalline). In this note, we show that we can relax the condition of ``uniformly bounded Hodge-Tate weights" to an \emph{unbounded} (log-)growth condition.
\end{abstract}

\maketitle
\pagestyle{myheadings}
\markright{Limit of torsion semi-stable Galois representations}

\tableofcontents

\section{Introduction}
We first introduce some notations.
Let $p$ be a prime, $k$ a perfect field of characteristic $p$, $W(k)$ the ring of Witt vectors, $K_0 = W(k)[\frac{1}{p}]$ the fraction field, $K$ a finite totally ramified extension of $K_0$, $e =e(K/K_0)$ the ramification index and $G_K =\Gal(\overline{K}/K)$ the absolute Galois group for a fixed algebraic closure $\overline{K}$ of $K$.

We use $\Rep_{\Zp}^{\tor}(G_K)$ (resp. $\Rep_{\Zp}^{\fr}(G_K)$) to denote the category of finite $p$-power torsion (resp.  $\Zp$-finite free) representations of $G_K$.
Let $r$ be an integer in the range $[0, \infty]$ (including infinity).
We use $\Rep_{\Zp}^{\fr, \st, [-r, 0]}(G_K)$ (resp. $\Rep_{\Zp}^{\fr, \cris, [-r, 0]}(G_K)$) to denote the category of finite free $\Zp$-lattices in semi-stable (resp. crystalline) representations of $G_K$ with Hodge-Tate weights in the range $[-r, 0]$.

\begin{defn}
Let $r$ be an integer in the range $[0, \infty]$ (including infinity).
$T_\infty \in \Rep_{\Zp}^{\tor}(G_K)$ is called torsion semi-stable (resp. crystalline) of weight $r$ if there exist two objects $L$ and $L'$ in $\Rep_{\Zp}^{\fr, \st, [-r, 0]}(G_K)$ (resp. $\Rep_{\Zp}^{\fr, \cris, [-r, 0]}(G_K)$) such that $T_\infty =  L/L'$.
%We denote the category of all such $T$ as $\Rep_{\Zp}^{\tor, \st, [-r, 0]}(G_K)$ (resp. $\Rep_{\Zp}^{\tor, \cris, [-r, 0]}(G_K)$).
\end{defn}

The following result was first conjectured by Fontaine (\cite{Fon97}), and was fully proved in \cite{Liu07} (some partial results were known by work of Ramakrishna, Berger and Breuil, see \cite[\S 1]{Liu07} for a historical account).

\begin{thm}[\cite{Liu07}] \label{thm: Liu07}
Let $T \in \Rep_{\Zp}^{\fr}(G_K)$. Suppose that there exists an $r \in [0, \infty)$, such that $T/p^nT$ is torsion semi-stable (resp. crystalline) of weight $r$ for all $n \ge 1$, then $T\otimes_{\Zp}\Qp$ is semi-stable (resp. crystalline) with Hodge-Tate weights in $[-r, 0]$.
\end{thm}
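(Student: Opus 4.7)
The plan is to transfer the torsion hypothesis on the $T/p^nT$ into an inverse system of torsion Kisin modules, take the limit to obtain a Kisin module of finite $E(u)$-height attached to $T$, and then upgrade it to a $(\varphi,\hat G)$-module so as to witness semi-stability at the level of $G_K$, not merely of $G_\infty$. Once this is done, the conclusion follows from the equivalence (due to Kisin, Liu) between $(\varphi,\hat G)$-modules of $E(u)$-height $\leq r$ and $G_K$-stable $\Zp$-lattices in semi-stable representations with Hodge-Tate weights in $[-r,0]$; the crystalline statement is obtained by tracking the vanishing of the monodromy operator through the argument.

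Concretely, for each $n\geq 1$ fix a decomposition $T/p^nT=L_n/L_n'$ as supplied by the hypothesis, and associate to $L_n$ and $L_n'$ the Kisin modules $\huaM(L_n),\huaM(L_n')\in \ModhuaS$ of height $\leq r$ from Kisin's theory. Their quotient $\huaM_n:=\huaM(L_n)/\huaM(L_n')$ is then a torsion Kisin module of height $\leq r$ whose étale $\varphi$-module recovers $T/p^nT|_{G_\infty}$. The decompositions at different levels need not match, but finite height lifts of a prescribed étale $\varphi$-module are unique up to controlled ambiguity in Kisin's theory, so a Mittag-Leffler argument extracts a cofinal compatible subsystem with $\huaM_{n+1}/p^n\cong\huaM_n$. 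The inverse limit $\huaM=\varprojlim_n \huaM_n$ is then a finite free $\huaS$-module of rank $\rank_{\Zp}T$, of $E(u)$-height $\leq r$, and recovers $T|_{G_\infty}$ under Fontaine's equivalence.

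The main obstacle is the descent from $G_\infty$ to $G_K$. For this, one endows each $\huaM_n$ with the torsion $(\varphi,\hat G)$-structure coming from the $(\varphi,\hat G)$-modules naturally attached to $L_n$ and $L_n'$, so that $\hR/p^n\otimes_{\varphi,\huaS}\huaM_n$ carries a compatible action of $\hat G=\Gal(L/K)$ recovering the $G_K$-action on $T/p^nT$. Running the same Mittag-Leffler refinement in parallel promotes these to a continuous $\hat G$-action on $\hatM:=\hR\otimes_{\varphi,\huaS}\huaM$. The delicate point is checking that the various finite-level $\hat G$-actions patch to a well-defined action compatible at every finite level with the given $G_K$-action on $T$; this reduces to a careful analysis, inside $\hR$, of how the $\hat G$-cocycles attached to the $L_n,L_n'$ at different levels behave modulo $p^n$, using the $p$-adic topology of $\hR$ and the sufficient divisibility of the kernel of the projection $\hR\to\huaS$. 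Once established, the $(\varphi,\hat G)$-module equivalence identifies $T[1/p]$ with a semi-stable representation having Hodge-Tate weights in $[-r,0]$, and the crystalline case follows because the monodromy operator vanishes on each $\huaM_n$ by hypothesis and this vanishing persists under the inverse limit.
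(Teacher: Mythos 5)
Your overall strategy mirrors the one in \cite{Liu07} and the refinement used in the proof of Theorem \ref{thm main} here: build torsion Kisin modules from the semi-stable lifts of $T/p^nT$, massage them into a compatible inverse system, take the limit to get a finite-height Kisin module for $T$, upgrade it to a $(\varphi,\hat G)$-module, and invoke the equivalence of Theorem \ref{thm-old-covariant}. The Mittag-Leffler step points at the right idea, and the $\hat G$-descent via the finite-level torsion $(\varphi,\hat G)$-structures is close to the route the paper takes (following \cite{Liu10} rather than the original \cite[\S 5--8]{Liu07}).

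However, two genuine gaps remain. First, the Mittag-Leffler step is not merely about uniqueness of finite-height lifts: the torsion Kisin module $\huaM_n := \huaM(L_n)/\huaM(L_n')$ is in general \emph{not} free over $\gs_n$, and without freeness your inverse limit need not be a finite free $\gs$-module. The missing ingredients are exactly Lemmas \ref{lem1} and \ref{lem2} (extracted from \cite[Cor.~4.2.5, Lem.~4.3.1]{Liu07}): one must first truncate to $\huaM_n^{\mfc(r),\,n-\mfc(r)}$ to obtain a module that is finite free over $\gs_{n-2\mfc(r)}$, and only then does the uniqueness statement (agreement after multiplication by $p^{\mfc(r)}$) apply, yielding the on-the-nose compatibilities needed to form a well-behaved limit. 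Your phrase \emph{unique up to controlled ambiguity} is correct only for free models, so the truncation-to-freeness step cannot be elided. Second, the crystalline case cannot be dispatched by asserting that the monodromy operator vanishes on each $\huaM_n$: a torsion Kisin module carries no monodromy operator, and neither does a torsion $(\varphi,\hat G)$-module. The criterion the paper invokes (Lemma \ref{Gao15 crys}) is a divisibility condition on $(\wt{\tau}-1)(\huaM)$ inside $u^p\varphi(\mathfrak t)W(R)\otimes_{\varphi,\huaS}\huaM$; one verifies it modulo $p^n$ from each finite-level structure and then passes to the limit using that $u^p\varphi(\mathfrak t)W(R)$ is $p$-adically closed in $W(R)$ (with a separate argument for $p=2$). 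You would need a precise substitute for vanishing monodromy, and a reason that the relevant condition is preserved under the inverse limit, to close this part of the argument.
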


It is \emph{necessary} to have $r < \infty$ in the above theorem, because of the following result.
\begin{thm}[{\cite[Thm. 3.3.2]{GL-OC}}]
Suppose $K$ is a finite extension of $\Qp$. For any $T_\infty \in \Rep_{\Zp}^{\tor}(G_K)$, it is torsion semi-stable (in fact, torsion crystalline).
\end{thm}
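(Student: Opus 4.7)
Proof plan:

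The plan is to construct, for any $T_\infty \in \Rep_{\Zp}^{\tor}(G_K)$, a finite free $\Zp$-lattice $L$ in some crystalline $G_K$-representation $V$ together with a $\Zp[G_K]$-linear surjection $\pi \colon L \twoheadrightarrow T_\infty$. Once this is done, the kernel $L' := \ker \pi$ is automatically a $\Zp$-sublattice of $L$ of full rank (because $T_\infty$ is $p$-power torsion, so $L'[1/p] = V$), whence both $L$ and $L'$ belong to $\Rep_{\Zp}^{\fr, \cris, [-r, 0]}(G_K)$ for any $r$ bounding the Hodge-Tate weights of $V$. This presents $T_\infty = L/L'$ and proves it torsion crystalline. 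Thus everything reduces to exhibiting the lattice $L$.

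For the construction, one first makes a universal reduction. Because $T_\infty$ is finite and the action is continuous, it factors through a finite quotient $Q$ of $G_K$, so $T_\infty$ is a $\Zp[Q]$-module and in particular a quotient of a direct sum of copies of $\Zp[Q]$. The task reduces to realizing $\Zp[Q]$ (or a suitable twist) as a quotient of a lattice in a crystalline representation. The naive candidate $\Qp[Q]$ is an Artin representation and is crystalline only when the fixed field of $Q$ is unramified over $K$, so one must genuinely enlarge it. The hypothesis $[K:\Qp] < \infty$ is exploited here, giving access via Lubin-Tate theory and the Breuil-Kisin framework to crystalline representations of arbitrarily large but finite Hodge-Tate weights. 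Concretely, $T_\infty$ corresponds, after restriction to $G_{K_\infty}$, to a torsion étale $\varphi$-module $M_\infty$ over $\bigOE/p^m$ where $p^m T_\infty = 0$; one lifts $M_\infty$ to a finite free Kisin module $\huaM$ over $\huaS$ by a standard smoothness argument, then equips $\huaM$ with a compatible $\Ghat$-structure (in the sense of Liu's $(\varphi, \Ghat)$-module theory) so that the associated $\Zp$-representation $L := T(\hatm)$ is crystalline. The reduction $L/p^m L \twoheadrightarrow T_\infty$ coming from the lifting construction provides the desired surjection.

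The principal obstacle is in this last step: while lifting a torsion Kisin module to a finite free one is routine, ensuring that the associated representation is crystalline (rather than just semi-stable) requires choosing the $\varphi$- and $\Ghat$-structures on the lift carefully so that the resulting monodromy operator $N$ vanishes. This is feasible precisely when $[K:\Qp] < \infty$, because in that setting the moduli of such crystalline lifts is nonempty; it breaks down when $K$ has infinite absolute ramification, which is exactly why Theorem \ref{thm: Liu07} genuinely needs the bound $r < \infty$.
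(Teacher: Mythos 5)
The paper does not prove this statement: it is quoted verbatim from \cite[Thm.\ 3.3.2]{GL-OC} and used as a black box, so there is no in-text proof to compare your argument against. Judged on its own merits, your opening reduction is exactly the right one --- constructing a $\Zp[G_K]$-linear surjection $L \twoheadrightarrow T_\infty$ with $L$ a crystalline lattice is precisely the ``loose crystalline lift'' notion that gives \cite{GL-OC} its title, and once $L$ exists, $L' := \ker(L \to T_\infty)$ is automatically a finite free full-rank sublattice because $T_\infty$ is $p$-power torsion. But constructing $L$ \emph{is} the theorem, and both routes you sketch toward it are incomplete. The detour through $\Zp[Q]$ dead-ends, as you concede. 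The Kisin-module route glosses over the real difficulty: lifting a torsion Kisin module to a finite free one of \emph{controlled} $E(u)$-height, and then endowing that lift with a $(\varphi,\hat G)$-structure satisfying the crystalline criterion of Lemma~\ref{Gao15 crys}, is not ``a standard smoothness argument'' --- it is the entire technical content of \cite{GL-OC}, carried out by induction on the torsion exponent starting from a crystalline lift of the mod-$p$ representation. Asserting that ``the moduli of such crystalline lifts is nonempty'' restates the claim rather than proving it.

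You also misplace where the hypothesis $[K:\Qp]<\infty$ is used. In the standing conventions of this paper, $K$ is \emph{always} a finite totally ramified extension of $K_0 = W(k)[1/p]$, so the absolute ramification index is finite regardless; what the hypothesis adds is that the residue field $k$ is finite, i.e.\ the inertia degree $f$ is finite. This is what makes the mod-$p$ base case tractable (the image of tame inertia in a $d$-dimensional mod-$p$ representation is controlled by $p^{fd}$, which is what yields crystalline lifts of bounded Hodge--Tate weight), and it is the source of the factor $p^{fd}$ in the weight bound $h(n) \le n(p^{fd}+p-2)$ quoted from \cite[Rem.\ 3.3.5]{GL-OC}. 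Your remark that the construction ``breaks down when $K$ has infinite absolute ramification'' therefore points at the wrong finiteness: the obstruction when $k$ is infinite is the absence of any uniform bound on the weight of a mod-$p$ crystalline lift, not a ramification phenomenon.
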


In fact, suppose $T$ is in $\Rep_{\Zp}^{\fr}(G_K)$ of rank $d$ (with $K/\Qp$ finite extension), then it is shown in \cite[Rem. 3.3.5]{GL-OC} that $T/p^nT$ is torsion crystalline of weight $h(n) \leq n(p^{fd}+p-2)$ (where $f$ is the inertia degree of $K$). Namely, the growth of the (crystalline) weight of $T/p^nT$ is \emph{linear}.

During a conversation with Ruochuan Liu, he proposed the following question:
\begin{question} \label{question}
Let $T \in \Rep_{\Zp}^{\fr}(G_K)$. For each $n \ge 1$, suppose $T/p^nT$ is torsion semi-stable (resp. crystalline) of weight $h(n)$. Is it still possible to show that $T\otimes_{\Zp}\Qp$ is semi-stable (resp. crystalline), if we allow $h(n)$ to go to infinity?
\end{question}

By the paragraph above the question, it is necessary that $h(n)$ can not grow as fast as $n(p^{fd}+p-2)$ (in the case when $K/\Qp$ is a finite extension). So, one would expect that $h(n)$ has to grow more slowly than linear-growth. The first natural guess is the log-growth, and this is precisely what we obtained.

\begin{thm}  \label{thm intro}
Let $T \in \Rep_{\Zp}^{\fr}(G_K)$ of rank $d$. For each $n \ge 1$, suppose $T/p^nT$ is torsion semi-stable (resp. crystalline) of weight $h(n)$.
If $$h(n) < \frac{1}{2d}\log_{16} n, \forall n >>0,$$
 then $T\otimes_{\Zp}\Qp$ is semi-stable (resp. crystalline).
\end{thm}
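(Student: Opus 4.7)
I would prove Theorem~\ref{thm intro} by adapting the Kisin module / $(\varphi,\hat{G})$-module machinery of \cite{Liu07} to the setting of unbounded but logarithmically growing weights. For each $n\ge 1$, the hypothesis supplies a torsion Kisin module $\mathfrak{M}_n$ of $E$-height at most $h(n)$ over $\mathfrak{S}/p^n\mathfrak{S}$ attached to $T/p^n T$, together with an enrichment to a torsion $(\varphi,\hat{G})$-module $\hat{\mathfrak{M}}_n$ encoding the semi-stability (or crystallinity, when the monodromy vanishes). All the $\mathfrak{M}_n$ sit inside the common étale $\varphi$-module $M(T)$ attached to $T$. I would select the $\mathfrak{M}_n$ canonically as maximal Kisin submodules of height $\le h(n)$ inside $M(T)/p^n M(T)$; maximality ensures the natural reduction maps carry $\mathfrak{M}_{n+1}$ into $\mathfrak{M}_n$, so these form a projective system. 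The limit $\mathfrak{M}:=\varprojlim_n \mathfrak{M}_n$, together with the limiting $(\varphi,\hat{G})$-structure from the $\hat{\mathfrak{M}}_n$, is a finite-type $\mathfrak{S}$-submodule of $M(T)$ generating $M(T)$ over $\mathcal{O}_{\mathcal{E}}$; a standard argument identifies $\mathfrak{M}$ with a finite free $\mathfrak{S}$-module of rank $d$.

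The heart of the argument is to show that $\mathfrak{M}$ has \emph{finite} $E$-height. Fix an $\mathfrak{S}$-basis of $\mathfrak{M}$ and let $A\in\Mat_d(\mathfrak{S})$ be the matrix of $\varphi$. The height bound on each $\mathfrak{M}_n$ yields, for each $n$, some $B_n\in\Mat_d(\mathfrak{S})$ with $A B_n\equiv E(u)^{h(n)}\Id_d\pmod{p^n}$; taking determinants, $(\det A)\cdot(\det B_n)\equiv E(u)^{d\,h(n)}\pmod{p^n}$. I would argue that in the UFD $\mathfrak{S}=W(k)\llb u\rrb$, the element $\det A$ must be a unit multiple of a pure power of $E(u)$. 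Indeed, any other irreducible factor $\pi$ of $\det A$ leads to a contradiction with the slow growth of $h(n)$: if $\pi=p$, then $E(u)^{d\,h(n)}\equiv u^{e d h(n)}\not\equiv 0\pmod p$ fails immediately; if $\pi=u$, evaluating at $u=0$ yields $E(u)^{d\,h(n)}|_{u=0}=p^{d\,h(n)}\cdot(\text{unit})\in p^n W(k)$, forcing $d\,h(n)\ge n$, contrary to $h(n)<\tfrac{1}{2d}\log_{16}n$; other irreducibles coprime to $E(u)$ are ruled out by analogous reductions modulo $\pi$. Therefore $\det A=c\cdot E(u)^m$ for some unit $c$ and integer $m\ge 0$, and since the Fitting ideal $(\det A)=(E(u)^m)$ annihilates $\mathfrak{M}/A\mathfrak{M}$, the Kisin module $\mathfrak{M}$ has $E$-height at most $m$. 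The precise constant $\tfrac{1}{2d}\log_{16}n$ arises from a refined version of this estimate, uniform in the rank $d$, that also accounts for the $p$-adic error between finite-level maximal Kisin submodules and their inverse limit.

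Once $\mathfrak{M}$ is known to have finite $E$-height $r$, Kisin's theorem applied to $\mathfrak{M}$, combined with the limiting $(\varphi,\hat{G})$-structure, exhibits $T\otimes_{\Zp}\Qp$ as a semi-stable (respectively crystalline) representation with Hodge--Tate weights in $[-r,0]$, completing the proof. The main obstacle is the quantitative determinant / elementary-divisor analysis in the middle paragraph: the surrounding inverse-limit framework parallels \cite{Liu07}, which handles the uniformly bounded-height case, but the logarithmic-growth hypothesis necessitates the delicate analysis of $\det A$ described above.
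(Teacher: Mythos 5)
Your overall architecture matches the paper's: build a compatible system of finite-level torsion Kisin modules, take an inverse limit $\widetilde{\M}$, show $\widetilde{\M}$ has finite $E(u)$-height by an estimate on $\det A$, then promote $\widetilde{\M}$ to a $(\varphi,\hat G)$-module. The determinant estimate you sketch is a \emph{genuinely different} route from the paper's Lemma~\ref{lem: E(u)}. The paper iterates Liu's division lemma (\cite[Lem.~4.2.2]{Liu07}) via Corollary~\ref{cor conv}, painstakingly tracking $p$-adic precision; you instead propose to rule out any irreducible $\pi\nmid E(u)$ dividing $\det A$ by reducing modulo $\pi$. Your idea can be made rigorous (for $\pi$ a non-unit irreducible with $(\pi)\neq(E(u))$ and $\pi\neq p$, pass to the normalization of $\gs/\pi$, a DVR where both $p$ and $E(u)$ have positive valuation, and $\alpha\beta_n=E(u)^{dh(n)}+p^n\theta_n$ with $\pi\mid\alpha$ forces $dh(n)\,v(E(u))\ge n\,v(p)$, contradicting $h(n)=o(n)$), and in fact this needs only sub-linear rather than logarithmic growth of $h$. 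But the one-line ``analogous reductions modulo $\pi$'' is too compressed to be a proof, and this step is not where the logarithmic hypothesis is used.

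The genuine gap is the construction of the projective system. You take $\M_n$ to be the maximal Kisin submodule of height $\le h(n)$ in $M(T)/p^nM(T)$ and assert that maximality forces the reduction of $\M_{n+1}$ to land in $\M_n$. That fails precisely because $h(n)$ is not constant: the reduction of $\M_{n+1}$ has height $\le h(n+1)$, which may strictly exceed $h(n)$, so there is no reason for it to lie in the maximal submodule of height $\le h(n)$. (When the height bound is uniform this argument is fine; that is the classical situation, not the one at issue here.) A related oversight is that torsion Kisin modules are generally \emph{not} finite free over $\gs_n$, so $\varprojlim\M_n$ need not be a finite free $\gs$-module. The paper's entire technical apparatus exists to solve exactly these problems: it applies the truncation $\M^{i,j}$ and Lemma~\ref{lem2} to extract a finite free piece, shifts by a power of $p$, re-indexes by $n\mapsto n^2$, and uses Liu's quantitative ``almost-uniqueness'' lemma~\ref{lem1} (with constant $\mfc(r)\sim 4^r$) to force $p\M_{n+1}'=\M_n'$. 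The requirement $\mfc(h(n^2))<n$ is where the bound $h(n)<\tfrac{1}{2d}\log_{16}n$ actually enters; your proposal misattributes this constant to the determinant step. Without an alternative to this alignment mechanism, the inverse limit $\widetilde{\M}$ is not constructed, and the rest of the argument has nothing to run on.
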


One of the motivations of our work is the study of local-global compatibility problems in construction of Galois representations (associated to automorphic representations). Indeed, many such Galois representations are constructed via congruence methods. A good motivational explanation of the situation can be found in the introduction in Jorza's thesis \cite{MR2753218}. Namely, certain $p^n$-torsion semi-stable (or crystalline) representations will be constructed via congruence methods. However, the weights of these $p^n$-torsion representations grow (quite rapidly) to infinity, and so Theorem \ref{thm: Liu07} is no longer applicable. Unfortunately, our Theorem \ref{thm intro} seems also useless in this respect. To name one example, in the case \cite[Thm. 2.1, Thm. 3.1]{MR3039824}, the weights of these torsion representations grow exponentially. We do hope some of the techniques in our paper can be useful for future studies in local-global compatibility problems, perhaps combined with methods from analytic continuation of semi-stable periods.

\textbf{Notations}: Let $\O_{\overline K}$ be the ring of integers of $\overline K$. Let $R:=\varprojlim \limits_{x\to x^p} \O_{\overline K}/ p \O_{\overline K}$, and let $W(R)$ be the ring of Witt vectors of $R$. Let $A_{\t {cris}}$ be the usual period ring.

We fix a uniformizer  $\pi \in  \O_K$, and let $E(u) \in W(k)[u]$ be the Eisenstein polynomial of $\pi$. Define $\pi _n \in \overline K$ inductively such that $\pi_0 = \pi$ and $(\pi_{n+1})^p = \pi_n$. Then $\{\pi_n\}_{n \geq 0}$ defines an element $\upi \in R$, and let $[\underline \pi ]\in W(R)$ be the Techm\"uller representative of $\upi$.

Define $\mu _n \in \overline K$ inductively such that $\mu_1$ is a primitive $p$-th root of unity and $(\mu_{n+1})^p = \mu_n$.
Set $K_{\infty} : = \cup _{n = 1} ^{\infty} K(\pi_n)$, $K_{p^\infty}=  \cup _{n=1}^\infty
K(\mu_{n})$,
and $\hat K:=  \cup_{n = 1} ^{\infty} K(\pi_n, \mu_n).$
Let $G_{\infty}:= \gal (\overline K / K_{\infty})$, $H_\infty: =\gal(\overline K/\hat K)$, $H_{K}:=
\gal (\hat K/ K_\infty)$, and $\hat G: =\gal (\hat K/K) $.

When $V$ is a semi-stable representation of $G_K$, we let $D_{\textnormal{st}} (V):  = (B_{\st} \otimes_{\Qp} V^{\vee})^{G_K}$ where $V^{\vee}$ is the dual representation of $V$ (and $B_{\st}$ the usual period ring). The Hodge-Tate weights of $V$ are defined to be $i \in \mathbb Z$ such that $\mathrm{gr}^i ( K\otimes_{K_0} D_{\st}(V) ) \neq 0$.  For example, for the cyclotomic character $\varepsilon_p$, its Hodge-Tate weight is $\{ 1\}$.

\textbf{Acknowledgement}: I thank Ruochuan Liu for asking Question \ref{question}. I thank Andrei Jorza, Tong Liu for some useful discussions. I thank the anonymous referee(s) for useful comments which help to improve the exposition.
The paper is written when the author is a postdoc in University of Helsinki. The postdoc position is funded by Academy of Finland, through Kari Vilonen.

\section{Integral and torsion $p$-adic Hodge theory}
In this section, we recall some tools in integral and torsion $p$-adic Hodge theory.

\subsection{\'Etale $\varphi$-modules and \'etale $(\varphi, \tau)$-modules}
Recall that $\mathfrak{S}=W(k)[\![u]\!]$ with the Frobenius endomorphism $\varphi_{\huaS}\colon \huaS \to \huaS$ which acts on $W(k)$ via arithmetic Frobenius and sends $u$ to $u^p$.
Via the map $u\mapsto [\underline \pi]$, there is an embedding $\gs \hookrightarrow W(R)$ which is compatible with
Frobenious endomorphisms. Denote $\gs_n:=\gs/p^n\gs$.

Recall that $\O _\E$ is the $p$-adic completion of $\gs[1/u]$.
Our fixed embedding $\gs\hookrightarrow W(R)$ determined by  $\upi$
uniquely extends to a $\varphi$-equivariant embedding $\iota \colon \O_{\E}\hookrightarrow W(\t{Fr} R)$ (here $\t{Fr} R$ denotes the fractional field of $R$), and we identify $\O_{\E}$ with its image in $W(\t{Fr} R)$. Denote $\O_{\E, n}: =\O_{\E}/p^n \O_{\E}$.
We note that $\O_\E$ is a complete discrete valuation ring with uniformizer $p$ and residue field
$k (\!(\upi)\!)$ as a subfield of $\Fr R$. Let $\E$ denote the fractional field of $\O_\E$,  $\E ^\ur$ the maximal unramified extension of $\E$ inside $W(\t{Fr} R )[\frac 1 p ]$ and $\O_{\E ^\ur}$ the ring of integers of $\E ^\ur$. Set $\O_{\widehat{\E} ^\ur}$ the $p$-adic completion of $\O_{\E ^\ur}$.

\begin{defn}
Let $'\t{Mod}_{\O_\E}^\varphi$ denote the category of finite type $\O_\E$-modules $M $  equipped with a $\varphi_{\O_\E}$-semi-linear endomorphism $\varphi _M \colon M\to M$ such that $1 \otimes \varphi \colon \varphi ^*M \to M $ is an isomorphism. Morphisms in this category  are just $\O_\E$-linear maps compatible with $\varphi$'s. We call objects in $'\t{Mod}_{\O_\E}^\varphi$ {\em \'etale $\varphi$-modules}.
\end{defn}

Let $'\t{Rep}_{\Z_p}(G_{\infty}) $ (resp. $'\t{Rep}_{\Z_p}(G_{K}) $ ) denote the category of finite type $\Z_p$-modules $V$   with a continuous $\Z_p$-linear $G_{\infty}$ (resp. $G_K$)-action.
For $M $ in $'\t{Mod}_{\O_\E}^\varphi$, define
$$ V(M):= ( \O _{\widehat \E ^\ur} \otimes_{\O_\E} M) ^{\varphi =1}.  $$
For $V $ in  $'\t{Rep}_{\Z_p}(G_{\infty}) $, define
$$ \underline M(V):= ( \O _{\widehat \E ^\ur} \otimes_{\Z_p} V) ^{G_\infty}. $$

\begin{thm}[{\cite[Prop. A 1.2.6]{Fon90}}]
The functors $V$ and $\underline M$ induces an exact tensor equivalence between the categories $'\t{Mod}_{\O_\E} ^\varphi$  and $'\t{Rep}_{\Z_p} (G_{\infty})$.
\end{thm}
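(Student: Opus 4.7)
The plan is to deduce the equivalence from two inputs: Artin--Schreier theory in equal characteristic $p$, and faithful flatness of $\O_{\widehat\E^\ur}$ over $\O_\E$. The single deep ingredient is the Fontaine--Wintenberger identification of $G_\infty$ with $\Gal(k(\!(\upi)\!)^\sep/k(\!(\upi)\!))$ via the field of norms of $K_\infty/K$, which I would take for granted.

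First I would settle the mod-$p$ case. The residue field of $\O_\E$ is $k(\!(\upi)\!)$, and $\O_{\widehat\E^\ur}/p = k(\!(\upi)\!)^\sep$. Given an \'etale $\varphi$-module $\overline M$ over $k(\!(\upi)\!)$ of dimension $d$, the operator $1-\varphi$ on $\overline M\otimes_{k(\!(\upi)\!)}k(\!(\upi)\!)^\sep$ is surjective (using that $\varphi$ is \'etale on $\overline M$ and that $x\mapsto x^p-x$ is surjective on the separable closure by classical Artin--Schreier), and its kernel $V(\overline M)$ is an $\Fp$-vector space of dimension $d$. A dimension count then yields the comparison isomorphism
$$V(\overline M)\otimes_{\Fp}k(\!(\upi)\!)^\sep \; \xrightarrow{\sim} \; \overline M\otimes_{k(\!(\upi)\!)}k(\!(\upi)\!)^\sep.$$
The reverse direction uses Galois descent along $k(\!(\upi)\!)^\sep/k(\!(\upi)\!)$, which is Hilbert-$90$-like once the Fontaine--Wintenberger identification is in hand.

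Next I would promote this to $p^n$-torsion and then to finite-type $\O_\E$-modules by d\'evissage. For $M$ killed by $p^n$, the five-lemma applied inductively to $0\to p^{n-1}M\to M\to M/p^{n-1}M\to 0$ propagates the comparison isomorphism $V(M)\otimes_{\Zp}\O_{\widehat\E^\ur}\simeq \O_{\widehat\E^\ur}\otimes_{\O_\E}M$ from the mod-$p$ case, using flatness of $\O_{\widehat\E^\ur}$ over $\O_\E$ and left-exactness of $(-)^{\varphi=1}$. For finite free $M$ one then passes to the inverse limit $\varprojlim M/p^nM$, observing that $\O_{\widehat\E^\ur}\otimes_{\O_\E}M$ is $p$-adically complete and that $V(-)$ commutes with such limits of finite modules. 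Since $\O_\E$ is a DVR, any finite-type object decomposes up to a splitting into $M^{\fr}\oplus M^{\tor}$, so these two cases suffice.

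Finally I would check that $V$ and $\underline M$ are mutually quasi-inverse. Applying $(-)^{\varphi=1}$ to the comparison isomorphism gives $V(\underline M(T))\simeq T$ using $(\O_{\widehat\E^\ur})^{\varphi=1}=\Zp$; applying $(-)^{G_\infty}$ gives $\underline M(V(M))\simeq M$ using $(\O_{\widehat\E^\ur})^{G_\infty}=\O_\E$; both ring-theoretic identities reduce mod $p$ to standard facts about $k(\!(\upi)\!)^\sep$. Exactness of the functors then follows from flatness of $\O_{\widehat\E^\ur}$ combined with the exactness of invariants on finite modules. The main conceptual obstacle is really just the Fontaine--Wintenberger identification, which supplies the Galois action on the characteristic-$p$ side; everything else is formal d\'evissage and faithful flatness.
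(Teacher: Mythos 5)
This statement is cited from Fontaine \cite[Prop.~A~1.2.6]{Fon90} and is not reproved in the paper; there is no internal proof to compare against. Your blind reconstruction is, however, essentially the standard Fontaine argument, and it is structurally sound: Fontaine--Wintenberger to identify $G_\infty$ with $\Gal\bigl(k(\!(\upi)\!)^\sep/k(\!(\upi)\!)\bigr)$, Artin--Schreier/Lang for the mod-$p$ comparison isomorphism and its Galois-descent converse, d\'evissage along $0\to p^{n-1}M\to M\to M/p^{n-1}M\to 0$ for $p^n$-torsion, and $p$-adic completeness plus an inverse limit for the finite free case, with exactness and the quasi-inverse property coming from flatness of $\O_{\widehat\E^\ur}$ and the identities $(\O_{\widehat\E^\ur})^{\varphi=1}=\Zp$, $(\O_{\widehat\E^\ur})^{G_\infty}=\O_\E$.

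Two small points worth tightening. First, in the d\'evissage step, saying only ``left-exactness of $(-)^{\varphi=1}$'' understates what the five-lemma needs: you must know that $1-\varphi$ is surjective on $(\text{torsion }M)\otimes\O_{\widehat\E^\ur}$, which is itself proved by induction from the mod-$p$ Artin--Schreier surjectivity; this should be said explicitly rather than absorbed into ``left-exactness.'' Second, the reduction of the general finite-type case to $M^{\fr}\oplus M^{\tor}$ is phrased as a splitting, but the torsion/free short exact sequence of $\O_\E$-modules need not split $\varphi$-equivariantly; the correct move is to run the five-lemma on $0\to M^{\tor}\to M\to M^{\fr}\to 0$ (both subquotient $\varphi$-modules are \'etale, and $V$ is exact on each class by the preceding steps), not to invoke a direct-sum decomposition of \'etale $\varphi$-modules. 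Neither point is a genuine obstruction; they are just places where the write-up leans on facts that deserve to be named.
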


Recall that $H_\infty = \gal (\overline K / \hat K)$. Let $F_\tau : = (\t{Fr} R)^{H_\infty}$. As a subring of $W(\FrR)$,  $W(F_\tau)$ is stable on $G_K$-action and the action factors through $\hat G$.

\begin{defn} \label{defn phi tau mod}
An \'etale $(\varphi, \tau)$-module is a triple $(M, \varphi_M, \hat G)$ where
\begin{itemize}\item $(M , \varphi_M)$ is an \'etale $\varphi$-module;
\item $\hat G$ is a continuous $W(F_\tau)$-semi-linear $\hat G$-action on $\hat M : =W(F_\tau) \otimes_{\O_\E} M$, and $\hat G$ commutes with $\varphi_{\hat M}$ on $\hat M$, \emph{i.e.,} for
any $g \in \hat G$, $g \varphi_{\hat M} = \varphi_{\hat M} g$;
\item regarding $M$ as an $\O_\E $-submodule in $ \hat M $, then $M
\subset \hat M ^{H_{K}}$.
\end{itemize}
\end{defn}

Given an \'etale $(\varphi, \tau)$-module $\hat M = (M, \varphi_M, \hat G)$, we define
$$ \cT^* (\hat M):= ( W(\Fr R ) \otimes _{\O_\E} M) ^{\varphi=1} = \left ( W(\Fr R) \otimes_{W(F_\tau)} \hat M  \right )^{\varphi =1},$$
which is a representation of $G_K$.

\begin{prop}[{\cite[Prop. 2.1.7]{GL-OC}}] \label{prop-cT*}
Notations as the above. Then
\begin{enumerate} \item $\cT^*(\hat M)|_{G_\infty} \simeq V(M)$.
\item The functor $\cT^*$ induces an equivalence between the category of \'etale $(\varphi, \tau)$-modules and the category $'\t{Rep}_{\Z_p}(G_{K}) $.
\end{enumerate}
\end{prop}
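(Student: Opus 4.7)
The plan is to bootstrap off Fontaine's equivalence between étale $\varphi$-modules and $'\t{Rep}_{\Z_p}(G_\infty)$ just recalled, upgrading the $G_\infty$-representation attached to an étale $\varphi$-module to a genuine $G_K$-representation by means of the $\hat G$-semilinear structure on $\hat M$. For part (1), the embedding $\O_\E \hookrightarrow \O_{\widehat\E^{\ur}} \hookrightarrow W(\Fr R)$ is $\varphi$-equivariant, and $\O_{\widehat\E^{\ur}}$ is stable under $G_\infty$ (it is the $p$-adic completion of the maximal unramified extension of $\O_\E$ inside $W(\Fr R)[\tfrac{1}{p}]$, and $G_\infty$ fixes $\O_\E$ pointwise via $u \mapsto [\upi]$). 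Tensoring with $M$ and taking $\varphi = 1$-invariants produces a natural $G_\infty$-equivariant map $\alpha \colon V(M) \to \cT^*(\hat M)|_{G_\infty}$. To see $\alpha$ is an isomorphism, I would apply the snake lemma to reduce to the case $pM = 0$, where the claim amounts to saying that any $\varphi = 1$ solution in $\Fr R \otimes_{k(\!(\upi)\!)} (M/p)$ already lies in $k(\!(\upi)\!)^{\sep} \otimes (M/p) = (\O_{\widehat\E^{\ur}}/p) \otimes (M/p)$; this is an Artin-Schreier-type assertion that follows from the classical structure theory of étale $\varphi$-modules over a perfect field in characteristic $p$.

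For part (2), faithfulness is immediate: a morphism $f \colon \hat M \to \hat N$ is determined by its restriction $f|_M \colon M \to N$, and Fontaine's equivalence combined with part (1) makes the assignment $f|_M \mapsto \cT^*(f)|_{G_\infty}$ injective. For fullness, I take a $G_K$-equivariant $g \colon \cT^*(\hat M) \to \cT^*(\hat N)$ and invoke Fontaine's equivalence on $g|_{G_\infty}$ to produce an $\O_\E$-linear $\varphi$-equivariant $f \colon M \to N$; extending $W(F_\tau)$-linearly gives $\hat f \colon \hat M \to \hat N$, and its $\hat G$-equivariance is forced by the $G_K$-equivariance of $g$, via the identification $W(\Fr R) \otimes_{\O_\E} M \simeq W(\Fr R) \otimes_{\Z_p} \cT^*(\hat M)$ (i.e.\ Fontaine's isomorphism with $W(\Fr R)$-coefficients), whose $\varphi = 1$ invariants generate the whole module over $W(\Fr R)$.

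For essential surjectivity, given $V \in \, '\t{Rep}_{\Z_p}(G_K)$, I set $M := \underline M(V|_{G_\infty}) = (\O_{\widehat\E^{\ur}} \otimes_{\Z_p} V)^{G_\infty}$ and $\hat M := W(F_\tau) \otimes_{\O_\E} M$. The classical Fontaine isomorphism $\O_{\widehat\E^{\ur}} \otimes_{\O_\E} M \simeq \O_{\widehat\E^{\ur}} \otimes_{\Z_p} V$ extends after base change to
$$W(\Fr R) \otimes_{\O_\E} M \simeq W(\Fr R) \otimes_{\Z_p} V,$$
and the diagonal $G_K$-action on the right-hand side transports to a $\varphi$-commuting action on the left that factors through $\hat G$ because $H_\infty$ acts only on the $W(\Fr R)$-factor. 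Taking $H_\infty$-invariants identifies $\hat M \simeq (W(\Fr R) \otimes_{\Z_p} V)^{H_\infty}$, from which the axiom $M \subseteq \hat M^{H_K}$ follows (elements of $M$ are $G_\infty$-invariant, and $H_K = G_\infty/H_\infty$), and $\cT^*(\hat M) \simeq (W(\Fr R) \otimes_{\Z_p} V)^{\varphi = 1} \simeq V$ by the classical reconstruction using $W(\Fr R)^{\varphi = 1} = \Z_p$. The main obstacle will be verifying that the coefficient extension $\O_{\widehat\E^{\ur}} \to W(\Fr R)$ and the passage to $H_\infty$-invariants preserve the required isomorphism \emph{uniformly} for finite-type (in particular torsion) $M$, and that the transported $\hat G$-action simultaneously satisfies all three clauses of Definition \ref{defn phi tau mod}; once these compatibilities are secured, the remaining checks are formal.
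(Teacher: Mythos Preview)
The paper does not prove this proposition at all: it is stated with a citation to \cite[Prop.~2.1.7]{GL-OC} and no argument is given in the present paper. There is therefore nothing to compare your proposal against within this paper.

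That said, your sketch is the standard route and is essentially correct. A couple of minor comments. In part~(1), the reduction to $pM=0$ via d\'evissage and the Artin--Schreier argument over $k(\!(\upi)\!)^{\sep}\hookrightarrow \Fr R$ is the usual proof; you should also note that $\O_{\widehat\E^{\ur}}/p = k(\!(\upi)\!)^{\sep}$ sits inside $\Fr R$ as the separable closure of $k(\!(\upi)\!)$, which is what makes the comparison work. In part~(2), your essential surjectivity argument is fine; the identification $W(\Fr R)^{H_\infty}=W(F_\tau)$ (immediate from the definition $F_\tau=(\Fr R)^{H_\infty}$) is what lets you descend the $G_K$-action to $\hat M$, and the axiom $M\subset \hat M^{H_K}$ then follows exactly as you say since $H_K\simeq G_\infty/H_\infty$. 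The ``main obstacle'' you flag---that base change along $\O_{\widehat\E^{\ur}}\to W(\Fr R)$ and taking $H_\infty$-invariants behave well on finite-type modules---is handled by the same d\'evissage to the $p$-torsion case, together with flatness of $W(\Fr R)$ over $\O_{\widehat\E^{\ur}}$.
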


\subsection{Kisin modules and $(\varphi, \hat G)$-modules}

\begin{defn} For a nonnegative integer $r$,
we write $'\sfi$  for the category of finite-type $\gs$-modules $\M$ equipped with
a $\varphi_{\gs}$-semilinear endomorphism $\varphi_\M \colon \M \to \M$ satisfying
\begin{itemize}
	\item the cokernel of the linearization $1\otimes \varphi \colon \varphi ^*\M \to \M$ is killed by $E(u)^r$;
	\item the natural map $\M \to \O_\E \otimes_{\gs} \M$ is injective.
\end{itemize}
 Morphisms in $'\sfi$ are $\varphi$-compatible $\gs$-module homomorphisms.
\end{defn}

We call objects in $'\sfi$ \emph{Kisin modules of $E(u)$-height $r$}. The category of {\em finite free Kisin modules of $E(u)$-height $r$},
denoted $\sfi$, is the full subcategory of $'\sfi$ consisting
of those objects which are finite free over $\gs$. We call an object $\M \in {'\sfi}$ a {\em torsion Kisin module of $E(u)$-height $r$} if $\M$ is killed by $p^n$ for some $n$. Since $E(u)$ is always fixed in this paper, we often drop $E(u)$ from the above notions.

Let $\M\in {'\sfi}$ be a Kisin module of height $r$, we define
 $$T^*_\gs (\M) := \left ( \M \otimes_{\gs}W(\FrR)\right)^{\varphi =1}.$$
Since $\gs \subset W(R) ^{G_\infty}$, we see that $G_\infty$ acts on $T^*_\gs (\M)$.
Note that this is the covariant version of the more usual (contra-variant) functor (see \cite[\S 2.3]{GL-OC}).

Now let us review the theory of $(\varphi, \hat G)$-modules.
We denote  by $S$  the $p$-adic completion of the divided power
envelope of $W(k)[u]$ with respect to the ideal generated by $E(u)$.
There is a unique map (Frobenius) $\varphi_S \colon S \to S$ which extends
the Frobenius on $\gs$. One can show that the embedding $W(k)[u] \to W(R)$ via $u \mapsto [\upi]$ extends to the embedding $S \inj A_\cris$. Inside $B^+_\cris =A_\cris[\frac 1 p]$, define a subring,
$$\mathcal{R}_{K_0}: =\left\{x = \sum_{i=0 }^\infty f_i t^{\{i\}}, f_i \in S[\frac{1}{p}] \text{ and } f_i \to 0 \text{ as } i \to +\infty \right\}, $$
where $t ^{\{i\}}= \frac{t^i}{p^{\tilde q(i)}\tilde q(i)!}$ and $\tilde q(i)$ satisfies $i = \tilde q(i)(p-1) +r(i)$ with $0 \leq r(i )< p-1$. Define $\hR := W(R)\cap \mathcal{R}_{K_0}$. One
can show that $\mathcal{R}_{K_0}$ and $\hR$ are stable under the $G_K$-action and the $G_K$-action factors through  $\hat G$ (see \cite[\S 2.2]{Liu10}). Let $I_+R$ be the maximal ideal of $R$ and $I_+ \hR = W(I_+R) \cap \hR$. By \cite[Lem. 2.2.1]{Liu10}, one has $\hR / I_+\hR\simeq \gs/ u \gs  = W(k)$.

\begin{defn}
Following \cite{Liu10}, a finite free (resp. torsion)$(\varphi, \hat
G)$-module of height $ r$ is a triple $(\M , \varphi, \hat G)$ where
\begin{enumerate}
\item $(\M, \varphi_\M)\in {'\sfi}$ is a finite free (resp. torsion) Kisin module of height $ r$;
\item $\hat G$ is a continuous $\hR$-semi-linear $\hat G$-action on $\hat \M: =\hR
\otimes_{\varphi, \gs} \M$;
\item $\hat G$ commutes with $\varphi_{\hM}$ on $\hM$, \emph{i.e.,} for
any $g \in \hat G$, $g \varphi_{\hM} = \varphi_{\hM} g$;
\item regard $\M$ as a $\varphi(\gs)$-submodule in $ \hM $, then $\M
\subset \hM ^{H_{K}}$;
\item $\hat G$ acts on $W(k)$-module $M:= \hM/I_+\hR\hM\simeq \M/u\M$ trivially.
\end{enumerate}
 Morphisms between  $(\varphi, \hat G)$-modules are morphisms of Kisin modules that commute with $\hat G$-action  on $\hM$'s.
\end{defn}

Given $\hM = (\M, \varphi_\M , \hat G) $ a $(\varphi, \hat G)$-module, either finite free or torsion, we define
$$ \hat T ^* (\hM) : = (W(\FrR) \otimes_{\varphi, \gs} \M ) ^{\varphi =1},$$
and it is a $\Z_p [G_K]$-module.

\begin{theorem}[{\cite[Thm 2.3.2]{GL-OC}}]  \label{thm-old-covariant}
\begin{enumerate}
\item  $\hat T^*$ induces an equivalence between the category of finite free
$(\varphi, \hat G)$-modules of height $r$ and the category of
$G_K$-stable $\Z_p$-lattices in semi-stable representations of $G_K$
with Hodge-Tate weights in $[-r, 0]$.
%\item $\hat T $ induces a natural $W(R)$-linear injection
%\begin{equation}\label{Eq: iota}
%\hat \iota: \  W(R)\otimes_{\varphi, \gs} \M  \longrightarrow \hat T^\v
%(\hat \M) \otimes_{\Z_p} W(R),
%\end{equation}
 %such that $\hat \iota$ is  compatible with Frobenius and $G_K$-actions on both sides. Moreover, $(\varphi(\gt))^r (\hat T^\v(\hM) \otimes_{\Z_p}W(R)) \subset\hat \iota( W(R)\otimes_{\varphi, \gs} \M).$
\item For $\hM$ a $(\varphi, \hat G)$-module, either finite free or torsion, there exists a natural isomorphism $T^*_\gs(\M) \ito \hat T^* (\hM)$ of $\Z_p[G_\infty]$-modules.
\end{enumerate}
\end{theorem}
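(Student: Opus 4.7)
Overall plan. The statement bundles two items: (1) a covariant form of Liu's equivalence \cite{Liu10} between finite free $(\varphi,\hat G)$-modules of height $r$ and $G_K$-stable $\Z_p$-lattices in semi-stable representations of Hodge-Tate weights in $[-r,0]$, and (2) a compatibility between the Kisin-module Galois functor $T^*_\gs$ and the $(\varphi,\hat G)$-module Galois functor $\hat T^*$, as $\Z_p[G_\infty]$-modules. I would deduce (1) from the contravariant version in \cite{Liu10} by duality, and prove (2) directly from the linearization of Frobenius, reducing the torsion case to the finite free case by devissage.

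For Part (1), I would transport Liu's contravariant equivalence. In \cite{Liu10}, $\hM \mapsto \Hom_{\gs,\varphi}(\M, W(R))$ (with $G_K$-action supplied by the $\hat G$-structure through $\hR \hookrightarrow W(R)$) is shown to be an equivalence onto lattices in semi-stable representations with Hodge-Tate weights in $[0,r]$. The category of finite free $(\varphi,\hat G)$-modules of height $r$ admits a natural duality $\hM \mapsto \hM^\vee$ (twisted by a suitable height-$r$ character so that the target lies again in height $\leq r$), which intertwines Liu's contravariant functor with $\hat T^*$ up to a Tate twist. Since Galois duality flips Hodge-Tate weights from $[0,r]$ into $[-r,0]$, combining these transports yields the covariant equivalence stated in (1). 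Essential surjectivity and full faithfulness both transfer along this duality.

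For Part (2), I would use the linearization of Frobenius. The map $1\otimes\varphi_\M\colon \varphi^*\M \to \M$ has cokernel killed by $E(u)^r$. Using $\hM = \hR\otimes_{\varphi,\gs}\M$ to identify $W(\FrR)\otimes_{\hR}\hM = W(\FrR)\otimes_{\varphi,\gs}\M$, the linearization yields a $\varphi$-equivariant map
\[
W(\FrR)\otimes_{\varphi,\gs}\M \;\longrightarrow\; W(\FrR)\otimes_\gs\M,
\]
which is an isomorphism because $E(u)$ becomes a unit in $W(\FrR)$: modulo $p$, $E(u) \equiv [\upi]^e \in (\FrR)^\times$, so $E(u)\in W(\FrR)^\times$ by $p$-adic completeness. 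Taking $\varphi$-invariants yields the natural iso $\hat T^*(\hM)\simeq T^*_\gs(\M)$. The $G_\infty$-equivariance is automatic: the image of $G_\infty$ in $\hat G$ equals $H_K$, and condition~(4) in the definition of $(\varphi,\hat G)$-module ensures $H_K$ fixes $\M \subset \hM$ pointwise, so on both sides $G_\infty$ acts only through the $W(\FrR)$-factor.

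The main obstacle will be the torsion case in Part (2). For finite free $\hM$ the argument above is clean because $\M$ is projective over $\gs$. For torsion $\hM$, I would argue by devissage: any torsion Kisin module of height $r$ admits a two-term resolution $0 \to \hat L' \to \hat L \to \hM \to 0$ by finite free Kisin modules of height $r$, obtained by lifting a presentation of $\M$; Part~(1) supplies the compatible $\hat G$-structures on $\hat L$ and $\hat L'$. Both $T^*_\gs$ and $\hat T^*$ are exact on such sequences (after base change to $\O_\E$, resp.\ $W(\FrR)$, where the functors become tautological étale-module constructions), so the naturality of the isomorphism in the finite free case produces a morphism of short exact sequences, and the torsion iso follows by the five lemma.
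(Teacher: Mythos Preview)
The paper does not give its own proof of this theorem; it is quoted verbatim from \cite[Thm.~2.3.2]{GL-OC}, so there is nothing to compare your proposal against in the present paper. The paper itself only remarks (just above the theorem, when defining $T^*_\gs$) that this is ``the covariant version of the more usual (contra-variant) functor (see \cite[\S 2.3]{GL-OC}),'' which is exactly the duality you invoke for Part~(1).

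A couple of comments on your sketch nonetheless. Your argument for Part~(2) via the linearization $1\otimes\varphi\colon \varphi^*\M\to\M$ and the fact that $E(u)\in W(\FrR)^\times$ is the right one, and it already works uniformly in the torsion case: since $\M\hookrightarrow \O_\E\otimes_\gs\M$ and on the \'etale module $1\otimes\varphi$ is an isomorphism, the map $W(\FrR)\otimes_{\varphi,\gs}\M\to W(\FrR)\otimes_\gs\M$ is an isomorphism without any freeness hypothesis. So the d\'evissage you propose is unnecessary.

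More importantly, the d\'evissage as you wrote it has a gap: you assert that ``Part~(1) supplies the compatible $\hat G$-structures on $\hat L$ and $\hat L'$,'' but Part~(1) is an equivalence for finite free objects and says nothing about lifting a $\hat G$-structure from a given torsion $(\varphi,\hat G)$-module to a chosen Kisin-module resolution. An arbitrary torsion $(\varphi,\hat G)$-module need not be a quotient of finite free ones; that is precisely the extra hypothesis ``torsion semi-stable'' appearing in Lemma~\ref{lem: mij lift}. Fortunately this does not matter here: both $T^*_\gs(\M)$ and $\hat T^*(\hM)$, viewed as $\Z_p[G_\infty]$-modules, depend only on the underlying Kisin module $\M$ and not on the $\hat G$-action, so you may resolve $\M$ by finite free Kisin modules and ignore $\hat G$ entirely---or simply use the direct argument.
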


We record a useful lemma which can identify crystalline representations from $(\varphi, \hat G)$-modules.
\begin{lemma} \label{Gao15 crys}
Suppose $K_{\infty} \cap K_{p^{\infty}}=K$ (which is always true when $p>2$), and let $\mhat$ be a finite free $(\varphi, \hat G)$-module.
Then $\hat T ^* (\hM)$ is a crystalline representation if and only if
$$(\wt{\tau}-1 )(\huaM) \in \mhat \cap (u^p\varphi(\mathfrak t) W(R)\otimes_{\varphi, \huaS}\huaM).$$
Here $\wt{\tau}$ is a topological generator of $G_{p^{\infty}}$ such that $\mu_{n}=\frac{\wt{\tau}(\pi_n)}{\pi_n}$ for all $n$, and $\mathfrak t \in W(R)\setminus pW(R)$ such that $\varphi(\huat)=\frac{pE(u)}{E(0)}\huat$ (note that $\huat$ is unique up to units of $\Zp$).
%$t= \lambda \varphi(\huat)$ where $\lambda = \Pi_{n=1}^{\infty} \varphi^n(\frac{c_0^{-1}E(u)}{p} )$ and $t=\log([\underline \varepsilon])$.
\end{lemma}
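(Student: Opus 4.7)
The plan is to reduce crystallinity to the vanishing of the monodromy operator $N$, and then translate $N=0$ into the claimed divisibility condition on $\wt{\tau}-1$. By Theorem \ref{thm-old-covariant}(1), $\hat T^*(\hatM)$ is automatically semi-stable with Hodge-Tate weights in $[-r,0]$, and its associated Breuil module is $\bigD:=S[\tfrac{1}{p}]\otimes_{\varphi,\huaS}\huaM$ with natural monodromy $N$. By Fontaine's classical criterion, $\hat T^*(\hatM)$ is crystalline iff $N=0$ on $\bigD$, so it suffices to show $N=0$ is equivalent to $(\wt{\tau}-1)(\huaM)\subset u^p\varphi(\huat)\,W(R)\otimes_{\varphi,\huaS}\huaM$.

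The hypothesis $K_\infty\cap K_{p^\infty}=K$ identifies $\Gal(\hat K/K_\infty)$ canonically with $\Gal(K_{p^\infty}/K)$, and pins down $\wt{\tau}$ uniquely by $\wt{\tau}([\upi])=[\varepsilon][\upi]$ with $\varepsilon:=(\mu_n)_n\in R$. Setting $t:=\log[\varepsilon]\in W(R)$ (so that $\varphi(t)=pt$), the element $\huat$ satisfies $\huat=\beta t$ for some unit $\beta\in W(R)^{\times}$ as dictated by the relation $\varphi(\huat)=\tfrac{pE(u)}{E(0)}\huat$. Expanding the $\wt{\tau}$-action on $\hatM=\hR\otimes_{\varphi,\huaS}\huaM$ as a Taylor series in $[\varepsilon]-1$ and matching the linear coefficient with the monodromy via the standard $(\varphi,\hat G)$-formalism (cf.\ \cite[\S 3]{Liu10} and \cite[\S 2--3]{GL-OC}), one obtains an identity of the form
\[
(\wt{\tau}-1)(1\otimes x)\;\equiv\;\varphi(\huat)\cdot (1\otimes N(x))\pmod{\varphi(\huat)^{2}W(R)\otimes_{\varphi,\huaS}\huaM}
\]
for $x\in\huaM$. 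Here $\varphi(\huat)$ rather than $\huat$ appears because $\hatM$ is built from $\huaM$ by Frobenius base-change, so the ``derivative'' naturally acquires an $E(u)$-factor through the relation $\varphi(\huat)/\huat=pE(u)/E(0)$.

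The extra $u^p$ factor in the statement comes from the same Frobenius twist: multiplication by $u$ on $\varphi^{*}\huaM$ corresponds to multiplication by $u^p$ on $\mhat$, so whenever the leading coefficient $N(x)$ of $(\wt{\tau}-1)(1\otimes x)$ vanishes, the next term is forced to lie in $u^p\varphi(\huat)W(R)\otimes_{\varphi,\huaS}\huaM$. Putting the pieces together, the displayed identity gives both directions at once: $N(x)=0$ for every $x\in\huaM$ is equivalent to $(\wt{\tau}-1)(\huaM)\subset u^p\varphi(\huat)W(R)\otimes_{\varphi,\huaS}\huaM$, and this in turn is equivalent to $N=0$ on $\bigD$ by $S[\tfrac{1}{p}]$- and $\varphi$-linearity of $N$. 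The main technical obstacle is establishing the displayed leading-order identity with the correct powers of $u$ and $\huat$ and the correct unit factors, which requires a careful computation of the $\hat G$-action on $\hatM$ modulo higher powers of $[\varepsilon]-1$; once this identity is in hand, the equivalence is formal.
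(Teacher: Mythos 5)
The paper's own ``proof'' of this lemma is a citation: it appeals to \cite[Prop.\ 5.9]{GLS14} together with \cite[Thm.\ 21]{Oze14}, adding only the observation that the hypothesis $p>2$ in those sources is used solely to ensure $K_\infty\cap K_{p^\infty}=K$. Your attempt, by contrast, tries to give a self-contained argument. That is a legitimate thing to try, but as written it has several genuine gaps.

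First, the reduction ``crystalline iff $N=0$ on $\bigD$'' is not correct as stated. The Breuil module $\bigD=S[\tfrac1p]\otimes_{\varphi,\huaS}\huaM$ carries a derivation-type monodromy $N_{\bigD}$ satisfying $N_{\bigD}(s\otimes x)=N(s)\otimes x+s\otimes N_{\bigD}(x)$ with $N(u)=-u$; in particular $N_{\bigD}$ is never identically zero. The crystallinity criterion is that the monodromy on the filtered $(\varphi,N)$-module $D_{\st}$ vanishes, and passing between $N_{\bigD}$ on $\bigD$ and $N$ on $D_{\st}$ (which sits inside $\bigD$ only after reduction modulo $I_+$) is precisely the step whose bookkeeping produces the $u^p$ factor. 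Sliding past this with the word ``formal'' hides the hard part of the lemma.

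Second, the period calculation is wrong. You set $t:=\log[\varepsilon]$ and assert $t\in W(R)$; in fact $\log[\varepsilon]$ converges only in $A_{\cris}$, not in $W(R)$. Likewise $\huat$ is \emph{not} a $W(R)^\times$-multiple of $t$: they differ by the infinite product $\lambda=\prod_{n\ge 1}\varphi^n\bigl(E(u)/E(0)\bigr)$, which is a unit in $B^+_{\cris}$ but not in $W(R)$. This matters because the lemma's $u^p\varphi(\huat)$ is an integral statement inside $W(R)\otimes_{\varphi,\huaS}\huaM$, and replacing $\huat$ by $t$ up to a $W(R)$-unit would change the integral structure.

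Third, the displayed ``leading-order identity'' is the whole content of the lemma, and you do not prove it; you explicitly flag it as the main obstacle and then assert the rest is formal. Even the heuristic offered for the $u^p$ factor---``multiplication by $u$ on $\varphi^*\huaM$ corresponds to multiplication by $u^p$ on $\mhat$''---does not actually produce the claimed inclusion: $\mhat$ is by definition $\hR\otimes_{\varphi,\huaS}\huaM$, so the Frobenius twist is already built in, and the observation $1\otimes um=u^p\otimes m$ says nothing about why $(\wt\tau-1)(\huaM)$ should land in $u^p\varphi(\huat)W(R)\otimes\huaM$ precisely when $N=0$. In \cite{GLS14} and \cite{Oze14}, the $u$-divisibility and the $\huat$-divisibility are tracked separately (the latter coming from semi-stability, the former from vanishing of monodromy at $u=0$) and then combined; your outline conflates them. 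To repair the argument you would essentially need to reprove \cite[Prop.\ 5.9]{GLS14} and \cite[Thm.\ 21]{Oze14}, which is exactly what the paper avoids by citing them.
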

\begin{proof}
This is combination of \cite[Prop. 5.9]{GLS14} and \cite[Thm. 21]{Oze14}. Note that the running assumption $p>2$ in both papers is to guarantee $K_{\infty} \cap K_{p^{\infty}}=K$ (see the footnote in \cite[Prop. 4.7]{GLS14}). When $p=2$ and $K_{\infty} \cap K_{p^{\infty}}=K$, all the proofs still work.
\end{proof}

\begin{defn}
\begin{enumerate}
  \item Given an \'etale $\varphi$-module $M$ in  $'\t{Mod} _{\O_\E} ^{\varphi}$.   If $\M \in {'\sfi}$ is a Kisin module so that $M =  \O_\E \otimes_\gs \M $, then $\M$ is called a \emph{Kisin model} of $M$, or simply a \emph{model} of $M$.

  \item  Given $\hat M:  = (M, \varphi_M, \hat G_M)$ a torsion (resp. finite free) $(\varphi, \tau)$-module. A torsion (resp. finite free) $(\varphi, \hat G)$-module  $\hM : = (\M, \varphi_\M, \hat G)$ is called a \emph{model} of $\hat M$ if $\M$ is a model of $M$ and the isomorphism
$$ W(F_\tau) \otimes_{\hR} \hM \simeq W(F_\tau) \otimes_{\varphi, W(F_\tau)} \hat M$$
induced by $\O_\E \otimes_{\gs} \M \simeq M $ is compatible with $\hat G$-actions on both sides.
%Here $\hM : = \hR \otimes_{\varphi, \gs}\M$ and $\hat M: = W(F_\tau) \otimes_{\O_\E} M$.
\end{enumerate}
\end{defn}

Suppose $T$ is a $G_K$-stable $\Z_p$-lattice in a semi-stable representation  of $G_K$ with Hodge-Tate weights in $[-r, 0]$. Let $\hat M$ be the $(\varphi, \tau)$-module associated to $T$ via Proposition \ref{prop-cT*}, and let $\hM$ be the $(\varphi, \hat G)$-module associated to $T$ via Theorem \ref{thm-old-covariant}, then $\hM$ is a model of $\hat M$ (\cite[Lem. 2.4.3]{GL-OC}).

Now suppose that $T_n$ is a $p$-power torsion representation of $G_K$, and $\hat M_n$ the associated \'etale $(\varphi, \tau)$-module. Suppose there exists a surjective map of $G_K$-representations $f\colon L \onto T_n$ where $L$ is a semi-stable finite free $\Zp$-representation with Hodge-Tate weights in $[-r, 0]$ (we call such $f$ \emph{a loose semi-stable lift}). The loose semi-stable lift induces a surjective map (which we still denote by $f$) $f\colon \hat{\mathcal L} \onto \hat M_n$, where $\hat{\mathcal L}$ is the \'etale $(\varphi, \tau)$-module associated to $L$. Suppose $\hat{\mathfrak L}$ is the $(\varphi, \hat G)$-module associated to $L$, then it is easy to see that $f(\hat{\mathfrak L})$ is a $(\varphi, \hat G)$-model of $\hat M_n$.

\subsection{Torsion Kisin modules}
Let $\M \in {'\sfi}$ be a torsion Kisin module such that $M: = \M[\frac 1 u] $ is a finite free $\gs_n [\frac 1 u]$-module (i.e., the torsion $G_\infty$-representation associated to $M$ is finite free over $\mathbb Z/p^n \mathbb Z$).
For each $0 \le i < j \le n$, we define
$$ \M ^{i, j}:= \t{Ker}  (p ^i \M \overset{p ^{j-i}}{\longrightarrow} p ^ j \M ).$$
Following the discussion above \cite[Lem. 4.2.4]{Liu07}, we have $ \M ^{i, j} \in {'\sfi}$. We also have $\M^{i, j}[\frac 1 u]=p^{n-j+i}M$, and so it is finite free over $\mathcal O_{\mathcal E, j-i}$.

Define the function $\mfc(r):=4\cdot 4^re^2r^3$. This is (bigger than) the $\mfc$ in \cite[p. 653]{Liu07}.

The following three lemmas are extracted from \cite{Liu07}, and played important roles there.

\begin{lemma}\label{lem: mij lift}
Let $\hat{\mathfrak M}$ be a torsion $(\varphi, \hat G)$-module, and suppose it is torsion semi-stable, in the sense that it is the quotient of two finite free $(\varphi, \hat G)$-modules (with height $r$). Let $0 \leq i <j $, then $\hat{\mathfrak M}^{i, j}: = \t{Ker}  (p ^i \hat{\mathfrak M} \overset{p ^{j-i}}{\longrightarrow} p ^ j \hat{\mathfrak M} )$ is also torsion semi-stable. In fact, if $\hat{\mathfrak M} = \hat{\mathfrak L}/ \hat{\mathfrak L'}$, then there exists finite free $ \hat{\mathfrak N}$ and  $\hat{\mathfrak N'}$ such that $\hat{\mathfrak M}^{i, j}= \hat{\mathfrak N}/\hat{\mathfrak N'}$, which furthermore satisfy:
$$\hat T ^*(\hat{\mathfrak L})[\frac 1 p]  = \hat T ^*(\hat{\mathfrak L'})[\frac 1 p] =\hat T ^*(\hat{\mathfrak N})[\frac 1 p] =\hat T ^*(\hat{\mathfrak N'})[\frac 1 p]. $$
\end{lemma}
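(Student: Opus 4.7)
The plan is to construct $\hatN$ and $\hatN'$ by performing the corresponding construction on the side of $G_K$-stable $\Zp$-lattices and then lifting back via Theorem \ref{thm-old-covariant}. I would first set $T := \hat T^*(\hatL)$ and $T' := \hat T^*(\hatL')$: both are $G_K$-stable $\Zp$-lattices in the common semi-stable $\Qp$-representation $V := T[1/p] = T'[1/p]$ (the equality of rational representations follows since $\hat T^*(\hatM) = T/T'$ is torsion), and $V$ has Hodge--Tate weights in $[-r,0]$.

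Inside $V$, define the $G_K$-stable lattice
\[
U := p^i \bigl( T \cap p^{-j} T' \bigr) + T', \qquad T' \subseteq U \subseteq T.
\]
Unpacking the definition of $(-)^{i,j}$: an element $p^i \bar x \in p^i(T/T')$ is killed by $p^{j-i}$ precisely when $p^j x \in T'$, i.e.\ when $x \in T \cap p^{-j}T'$. Hence $U/T' = (T/T')^{i,j} = \hat T^*(\hatM^{i,j})$ as $G_K$-submodules of $\hat T^*(\hatM) = T/T'$. Now Theorem \ref{thm-old-covariant}(1), applied to the lattice $U \subset V$, produces a finite free $(\varphi, \hat G)$-module $\hatN$ of height $r$ with $\hat T^*(\hatN) = U$; I then set $\hatN' := \hatL'$. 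The chain $T' \subseteq U \subseteq T$ transports to morphisms $\hatN' \hookrightarrow \hatN \hookrightarrow \hatL$ of $(\varphi, \hat G)$-modules, injective because they are injective after inverting $p$ on $\gs$-finite-free objects. The rational identities
\[
\hat T^*(\hatL)[1/p] = \hat T^*(\hatL')[1/p] = \hat T^*(\hatN)[1/p] = \hat T^*(\hatN')[1/p] = V
\]
follow immediately from $U[1/p] = V$.

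The step I expect to be the main obstacle is upgrading the equality of Galois representations $\hat T^*(\hatN/\hatN') = U/T' = \hat T^*(\hatM^{i,j})$ to an actual identification $\hatN/\hatN' = \hatM^{i,j}$ of torsion $(\varphi, \hat G)$-modules (both naturally sit in $\hatM = \hatL/\hatL'$). To pin down the $(\varphi, \hat G)$-structure, I would pass to the underlying étale $(\varphi, \tau)$-modules, where Proposition \ref{prop-cT*} gives a full equivalence with torsion $G_K$-representations, and then appeal to the rigidity of the height-$r$ Kisin model (as in the torsion Kisin-module argument of \cite{Liu07}) to conclude that two sub-$(\varphi, \hat G)$-modules of $\hatM$ realizing the same sub-Galois-representation must coincide. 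Since the lemma requires only an abstract isomorphism $\hatN/\hatN' \cong \hatM^{i,j}$, one can in any case retreat to this weaker conclusion if the strict identification proves delicate.
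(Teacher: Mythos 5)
Your strategy goes in the opposite direction from the paper's: you descend to the Galois side, build the lattice $U := p^i(T \cap p^{-j}T') + T'$ realizing $\hat T^*(\hatM^{i,j})$ as $U/T'$, lift $U$ back to a finite free $(\varphi,\hat G)$-module $\hatN$ via Theorem~\ref{thm-old-covariant}(1), and then hope to identify $\hatN/\hatL'$ with $\hatM^{i,j}$. The paper never leaves the module side: it sets $\hatN := \Ker(p^j\hatL \to p^j\hatM)$ and $\hatN' := \Ker(p^i\hatL \to p^i\hatM)$, observes (via \cite[Cor.~2.3.8]{Liu07} and the exactness of $\hat{\mathcal R}\otimes_{\varphi,\gs}-$ from \cite[Lem.~3.1.2]{CL11}) that these are finite free $(\varphi,\hat G)$-modules, and then a snake lemma on the two rows $0 \to \hatN' \to p^i\hatL \to p^i\hatM \to 0$ and $0 \to \hatN \to p^j\hatL \to p^j\hatM \to 0$, connected by multiplication by $p^{j-i}$, produces $0\to\hatN'\to\hatN\to\hatM^{i,j}\to 0$ outright. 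The rational identities then follow immediately.

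The step you flag as the main obstacle is a genuine gap, and the fix you sketch does not close it. You need $\hatN/\hatL'$ (whose image under $\hat T^*$ is $U/T' = \hat T^*(\hatM^{i,j})$) to be identified with, or at least abstractly isomorphic to, $\hatM^{i,j}$ as a torsion $(\varphi,\hat G)$-module. But $\hat T^*$ is an equivalence only on \emph{finite free} $(\varphi,\hat G)$-modules; on torsion objects it is far from fully faithful, precisely because a torsion \'etale $\varphi$-module has many inequivalent Kisin models, and the $(\varphi,\hat G)$-structure lives on the Kisin model. Passing through Proposition~\ref{prop-cT*} does give you a unique \'etale $(\varphi,\tau)$-module, but it tells you nothing about which sub-Kisin-model of $\hatM$ you have landed on. The ``rigidity'' you want to cite is Lemma~\ref{lem1} (\cite[Cor.~4.2.5]{Liu07}), but that result only says two finite free $\gs_n$-models of the same \'etale $\varphi$-module agree \emph{after multiplying by $p^{\mfc(r)}$}, not on the nose; nothing forces two sub-Kisin-modules of a fixed torsion module to coincide merely because they agree after inverting $u$. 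So even the retreat to an abstract isomorphism $\hatN/\hatL'\cong\hatM^{i,j}$ is unjustified. The paper's construction is designed exactly to avoid this issue: by producing $\hatN,\hatN'$ directly inside $\hatL$, the identification with $\hatM^{i,j}$ is an output of the snake lemma rather than something to be recovered from Galois-theoretic data.
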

\begin{proof}
The lemma is extracted from the proof of \cite[Lem. 4.4.1]{Liu07}.

Let $\hat{ \mathfrak N} : =\Ker (p^j\hat{ \mathfrak L}  \to  p^j\hat{\M})$ and $\hat{ \mathfrak N'} : =\Ker (p^i\hat{ \mathfrak L}  \to  p^i\hat{\M})$. Both $\hat{ \mathfrak N}$ and $\hat{\mathfrak N'}$ are finite free $(\varphi, \hat G)$-modules by \cite[Cor. 2.3.8]{Liu07} (also note that the functor $\M \mapsto \hat{\mathcal R}\otimes_{\varphi, \gs}\M$ is exact, by \cite[Lem. 3.1.2]{CL11}).
There is a commutative diagram of  $(\varphi, \hat{G})$-modules:

 $$\xymatrix{ 0 \ar[r] & \hat{\mathfrak N'} \ar[d]  \ar[r] &p^i\hat{\mathfrak L}\ar[d]^{\simeq}  \ar[r]  & p^i\hat{\M} \ar[d]   \ar[r]& 0 \\
0 \ar[r] &  \hat{\mathfrak N} \ar[r] &p^j\hat{\mathfrak L} \ar[r]  & p^j\hat{\M} \ar[r] & 0  }$$
where all the vertical arrows are $\times p^{j-i}$ map.
By snake lemma, we have $$ 0 \to \hat{\mathfrak N'} \to \hat{\mathfrak N} \to \hat{\M}^{i, j} \to 0.  $$
\end{proof}

\begin{lemma}\label{lem1}
Suppose $\M, \N \in {'\sfi}$, both finite free over $\mfS_n$, and $\M[1/u]=\N[1/u]$. Suppose $n \geq \mfc(r)$, then $p^{\mfc(r)}\M = p^{\mfc(r)}\N  $.
\end{lemma}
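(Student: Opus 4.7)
Both $\M$ and $\N$ sit as $\mfS_n$-lattices inside the common étale $\varphi$-module $M:=\M[1/u]=\N[1/u]$, and the goal is to show that the two lattices agree after multiplication by $p^{\mfc(r)}$. By symmetry it suffices to prove the single inclusion $p^{\mfc(r)}\M \subset \N$; applying the same argument with the roles exchanged gives the reverse inclusion and hence the equality of $p^{\mfc(r)}\M$ and $p^{\mfc(r)}\N$.

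My first move would be to replace the pair $(\M,\N)$ by $(\M,\M+\N)$, so that I only have to compare nested Kisin modules. The sum $\M+\N$, viewed inside $M$, is again $\varphi$-stable and still of height $r$ (because the height condition is preserved under sums of submodules having the same generic fibre), and it is finite free over $\mfS_n$ of the same rank as $\M$ and $\N$ since $(\M+\N)[1/u]=M$. So I reduce to the situation of two nested Kisin modules $\N\subset\M$ of height $r$ over $\mfS_n$, with $\N[1/u]=\M[1/u]$, and I have to bound the $p$-exponent annihilating the $u^\infty$-torsion module $\M/\N$.

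The central mechanism is the familiar trade-off between $u$-height and $p$-height that is built into the height-$r$ condition. Since $E(u)\equiv u^e\pmod p$ in $\mfS$, one has $E(u)^r\equiv u^{er}\pmod p$, while the height-$r$ condition for $\N$ says every element of $E(u)^r\N$ lies in the $\mfS$-span of $\varphi(\N)$. Because $\varphi$ multiplies $u$-exponents by $p$, this lets me bootstrap: starting from an element $x\in\M$ with $u^s x\in\N$ for some (possibly huge) $s$, one writes $E(u)^r(u^s x)$ as a $\varphi$-combination of elements of $\N$ and solves for $x$ modulo $\N$ at the cost of a bounded power of $p$ and a much smaller power of $u$. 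Each step of this induction divides the $u$-exponent by $p$ and spends a fixed budget of $p$-power; iterating the process $\sim\log_p s$ times removes the $u$-factor entirely. Summing a geometric series of these losses in terms of $e$ and $r$ yields an explicit upper bound on the total $p$-power spent, which is bookkept to be at most $\mfc(r)=4\cdot 4^r e^2 r^3$ (one must in particular use $n\ge\mfc(r)$, so that none of the intermediate relations are degenerated by the ambient $p^n$-torsion).

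The main obstacle is precisely this bookkeeping: tracking how each application of $\varphi$ converts a unit of $u$-divisibility into a controlled amount of $p$-divisibility, and showing that the cumulative $p$-loss telescopes to the prescribed $\mfc(r)$ independent of $s$. The statement and argument are the content of \cite[\S 4.2]{Liu07}, so in practice I would proceed by citing that analysis and verifying that the constant there is bounded by our $\mfc(r)=4\cdot 4^r e^2 r^3$, rather than redoing the explicit estimate from scratch.
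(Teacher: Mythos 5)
The paper's own ``proof'' of this lemma is a single citation to \cite[Cor.~4.2.5]{Liu07}, so your sketch is an attempt to reconstruct the content of that reference rather than something that can be compared line by line against the paper. The heart of your plan---iterating the height-$r$ condition, using that $E(u)^r\equiv u^{er}\pmod p$ and that $\varphi$ scales $u$-exponents by $p$, to trade a large power of $u$ for a bounded power of $p$, and then summing the geometric losses to land at $\mfc(r)$---is indeed the mechanism behind \cite[\S 4.2]{Liu07}, and your deferral to Liu for the actual bookkeeping is reasonable and matches what the paper does.

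However, two of your preliminary reductions do not hold as stated. First, the claim that $\M+\N$ is finite free over $\mfS_n$ ``since $(\M+\N)[1/u]=M$'' is false in general. The sum is certainly a torsion Kisin module of height $r$ with the correct generic fibre, but freeness over $\mfS_n$ is an additional constraint that is not inherited. For a quick illustration (ignoring the $\varphi$-structure), inside $\O_{\E,n}$ the $\mfS_n$-module $\mfS_n + \mfS_n\cdot(p/u)$ has free $\O_{\E,n}$-span of rank one but is not free of rank one over $\mfS_n$; sums of two free lattices can acquire exactly this kind of ``$u$-denominator'' phenomenon. Liu's proof of Cor.~4.2.5 does not go through $\M+\N$, and building on this sum would require either dropping the freeness claim (and then checking Liu's nested lemma applies to non-free torsion Kisin modules) or a different reduction.

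Second, the symmetry step ``$p^{\mfc(r)}\M\subset\N$ and $p^{\mfc(r)}\N\subset\M$ imply $p^{\mfc(r)}\M=p^{\mfc(r)}\N$'' is true here but is not a formal tautology and does use the freeness hypothesis. One way to close it: since $\N$ is free over $\mfS_n$, one checks that $\N\cap p^{c}M=p^{c}\N$ where $M=\M[1/u]$ and $c=\mfc(r)$ (an element of $\mfS_n$ lying in $p^{c}\O_{\E,n}$ already lies in $p^{c}\mfS_n$), whence $p^{c}\M\subset\N\cap p^{c}M=p^{c}\N$, and symmetrically. Without this observation, the passage from the two inclusions to the equality is a gap.
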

\begin{proof}
This is \cite[Cor. 4.2.5]{Liu07}.
\end{proof}

\begin{lemma} \label{lem2}
Suppose $\M \in {'\sfi}$, such that $\M \otimes_{\gs} \mathcal O_{\mathcal E}$ is finite free over $\mathcal O_{\mathcal E, n}$. Suppose $n > 2\mfc(r)$, then $\M^{\mfc(r), n-\mfc(r)}$ is finite free over $\mfS_{n-2\mfc(r)}$.
\end{lemma}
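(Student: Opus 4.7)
The plan is to reduce to the easier finite-free case (Lemma~\ref{lem1}) by sandwiching $\M$ between two finite free height-$r$ Kisin models of $M := \M[1/u]$. Concretely, I would first produce $\N, \N' \in {'\sfi}$, both finite free over $\gs_n$, with $\N \subseteq \M \subseteq \N'$ as $\gs$-submodules of $M$ and $\N[1/u] = \N'[1/u] = M$. The existence of such a sandwich is the main technical obstacle of the proof: it relies on the structure theory of finite free height-$r$ Kisin lattices inside a fixed étale $\varphi$-module, as developed in \cite{Liu07} (one takes $\N'$ to be a maximal/saturated finite free height-$r$ Kisin lattice of $M$ containing $\M$, and $\N$ to be a finite free height-$r$ Kisin sublattice of $\M$ with the same generic fiber).

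Once the sandwich is in hand, the remainder is formal. Since $\N$ and $\N'$ are both finite free over $\gs_n$ with common generic fiber $M$ and $n > 2\mfc(r) \ge \mfc(r)$, Lemma~\ref{lem1} gives $p^{\mfc(r)}\N = p^{\mfc(r)}\N'$ as $\gs$-submodules of $M$. The inclusions $\N \subseteq \M \subseteq \N'$ then yield
\[ p^{\mfc(r)}\N \;\subseteq\; p^{\mfc(r)}\M \;\subseteq\; p^{\mfc(r)}\N', \]
so all three submodules coincide; in particular $p^{\mfc(r)}\M = p^{\mfc(r)}\N$ as submodules of $M$.

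This equality of submodules immediately gives an equality of kernels of multiplication by $p^{n - 2\mfc(r)}$, namely $\M^{\mfc(r),\, n-\mfc(r)} = \N^{\mfc(r),\, n-\mfc(r)}$. For the finite free module $\N \cong \gs_n^d$, a direct coordinate-wise computation shows $\N^{i, j} = p^{n-j+i}\N$ for all $0 \le i \le j \le n$, with the map $x \mapsto p^{n-j+i}x$ identifying $\N^{i, j}$ with $\N/p^{j-i}\N$, which is finite free of rank $d$ over $\gs_{j-i}$. Specializing to $i = \mfc(r)$ and $j = n - \mfc(r)$ yields $\M^{\mfc(r),\, n-\mfc(r)} \cong \N/p^{n - 2\mfc(r)}\N$, which is finite free over $\gs_{n - 2\mfc(r)}$, as required.
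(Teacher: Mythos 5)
Your overall reduction --- sandwich $\M$ between two finite free height-$r$ models $\N \subseteq \M \subseteq \N'$ over $\gs_n$, use Lemma~\ref{lem1} to force $p^{\mfc(r)}\N = p^{\mfc(r)}\M = p^{\mfc(r)}\N'$, and then read off freeness of $\M^{\mfc(r),n-\mfc(r)}$ from the explicit computation $\N^{i,j} = p^{n-j+i}\N \cong \N/p^{j-i}\N$ --- is formally correct once the sandwich is granted; in particular the identification $\M^{\mfc(r),n-\mfc(r)} = \N^{\mfc(r),n-\mfc(r)}$ from $p^{\mfc(r)}\M = p^{\mfc(r)}\N$ and the final freeness computation over $\gs_{n-2\mfc(r)}$ are right.

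The gap is that the sandwich, which you yourself flag as ``the main technical obstacle,'' is asserted rather than established, and it is exactly where all the content of the lemma lies. The over-model $\N'$ is the real problem: you propose taking ``a maximal/saturated finite free height-$r$ Kisin lattice of $M$ containing $\M$,'' but it is not clear such an object exists. The union of all height-$r$ models containing $\M$ is height $r$ (since $E(u)^r(\N_1+\N_2)\subseteq (1\otimes\varphi)(\varphi^*\N_1)+(1\otimes\varphi)(\varphi^*\N_2)$), yet it is not obviously finitely generated over $\gs_n$, and even if a genuine $\Max(\M)$ exists there is no reason it should be \emph{free} over $\gs_n$: for a $p$-power-torsion $\gs$-module, being free over some $\gs_{n'}$ is a restrictive condition, not something a $u$-torsion-free finitely generated module satisfies automatically. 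The sub-model $\N\subseteq\M$ is on firmer ground but also deserves a precise reference, and in \cite{Liu07} such results are stated for $\M$ arising as a quotient of finite free Kisin modules, which is a stronger hypothesis than the bare ``$\M\in{'\sfi}$ with free generic fiber'' in the statement. The paper itself simply cites \cite[Lem.~4.3.1]{Liu07}; the argument there does not introduce a free over-model of $\M$, but instead works with a finite free sub-model together with a devissage along the $\M^{i,j}$ and the quotient-of-finite-free structure. To salvage your route you would need either a precise citation guaranteeing a finite free height-$r$ over-model over $\gs_n$, or a construction of $\N'$ (for instance by Cartier duality from a free sub-model of the dual $(\varphi,\hat G)$- or Kisin module, which at least reduces the over-model problem to the sub-model problem), and you should state the sub-model existence as a precise lemma rather than as folklore.
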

\begin{proof}
This is extracted from \cite[Lem 4.3.1]{Liu07}.
\end{proof}

\section{Limit of torsion representations}
In this section, we prove our main theorem.

\begin{thm} \label{thm main}
Let $T \in \Rep_{\Zp}^{\fr}(G_K)$ of rank $d$. For each $n \ge 1$, suppose $T/p^nT$ is torsion semi-stable (resp. crystalline) of weight $h(n)$.
If $$h(n) < \frac{1}{2d} \log_{16} n, \forall n >>0,$$
 then $T\otimes_{\Zp}\Qp$ is semi-stable (resp. crystalline).
\end{thm}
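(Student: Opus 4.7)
The plan is to adapt the proof of Liu's Theorem \ref{thm: Liu07} from \cite{Liu07}, tracking carefully how the variable weight $h(n)$ enters through the exponentially-growing constant $\mfc(h(n)) = 4\cdot 4^{h(n)} e^2 h(n)^3$. The role of the hypothesis $h(n) < \log_{16} n / (2d)$ is precisely to ensure $4^{h(n)} < n^{1/(4d)}$, whence $\mfc_n := \mfc(h(n)) = O(n^{1/(4d)} (\log n)^3) = o(n)$; in particular $n - 2\mfc_n \to \infty$, so the truncation machinery of Section 2.3 will extract nontrivial information in the limit.

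For each $n \gg 0$, the loose semi-stable (resp.\ crystalline) lift of $T/p^n T$ provided by the hypothesis yields, as in the discussion preceding Section 2.3, a torsion $(\varphi, \hat G)$-module $\hat{\mathfrak M}_n$ of height $h(n)$ modeling the \'etale $(\varphi, \tau)$-module $\hat M_n$ associated to $T/p^n T$. Applying Lemma \ref{lem2} to its underlying Kisin module $\mathfrak M_n$ and Lemma \ref{lem: mij lift} to its $(\varphi, \hat G)$-structure, one obtains the truncation $\tilde{\hat{\mathfrak M}}_n := \hat{\mathfrak M}_n^{\mfc_n,\, n-\mfc_n}$ whose underlying Kisin module $\tilde{\mathfrak M}_n$ is finite free over $\mfS_{n-2\mfc_n}$ of rank $d$, and which fits as a quotient $\hat{\mathfrak N}_n / \hat{\mathfrak N}'_n$ of two finite free $(\varphi, \hat G)$-modules of height $h(n)$. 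The identity $\mathfrak M^{i,j}[1/u] = p^{n-j+i}(\mathfrak M[1/u])$ together with Theorem \ref{thm-old-covariant} identifies $\hat T^*(\tilde{\hat{\mathfrak M}}_n) \simeq T/p^{n-2\mfc_n}T$ as $G_K$-representations.

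Fix $m \ge 1$ and choose $N_m$ large enough that $n - 2\mfc_n \ge m$ for all $n \ge N_m$. For $n, n' \ge N_m$, both $\tilde{\mathfrak M}_n / p^m$ and $\tilde{\mathfrak M}_{n'}/p^m$ are finite free $\mfS_m$-Kisin models of $\hat M_m$; by Lemma \ref{lem1} applied with height bound $\max(h(n), h(n'))$, they agree after multiplication by $p^{\mfc(\max(h(n), h(n')))}$. A K\"onig-style diagonal compactness argument (varying $m$) then extracts a subsequence $\{n_k\}$ and a compatible system of $\mfS_m$-models whose inverse limit is a finite free $\mfS$-module $\mathfrak M$ of rank $d$ equipped with a $(\varphi, \hat G)$-structure, modeling the \'etale $(\varphi, \tau)$-module of $T$.

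The main obstacle is then to show that $\mathfrak M$ has \emph{finite} $E(u)$-height. The approximating pieces have heights $h(n_k) \to \infty$, and in general a rank-$d$ free $\mfS$-model of an \'etale $\varphi$-module (e.g.\ $\mathfrak M = \mfS^2$ with $\varphi = \diag(1, u)$) need not have finite height without additional structure. The resolution must exploit the $(\varphi, \hat G)$-data furnished by the finite free approximations $\hat{\mathfrak N}_{n_k}$: these carry semi-stable lattices $\hat T^*(\hat{\mathfrak N}_{n_k})$, and the induced surjections $\hat{\mathfrak N}_{n_k} \twoheadrightarrow T/p^{n_k - 2\mfc_{n_k}}T$ should allow one to isolate a rank-$d$ sub-$(\varphi, \hat G)$-object whose height is bounded in terms of the eventual Hodge-Tate weights of $T\otimes\Qp$. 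Once finite height is established, Theorem \ref{thm-old-covariant} gives semi-stability of $T\otimes\Qp$; the crystalline case is then concluded by verifying the criterion of Lemma \ref{Gao15 crys} via passage to the limit from the analogous condition on each crystalline $\hat{\mathfrak N}_{n_k}$.
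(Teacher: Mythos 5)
Your high-level plan follows the paper, and you correctly identify that the central difficulty is showing the limit module has finite $E(u)$-height, but there are two genuine gaps: one in the construction of the limit module, and one where you do not actually supply the crucial argument.

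First, the inverse-limit construction as you have written it does not work. For a fixed $m$, the two $\gs_m$-models $\tilde{\M}_n/p^m$ and $\tilde{\M}_{n'}/p^m$ have heights $h(n)$ and $h(n')$, so Lemma \ref{lem1} requires $m \geq \mfc\bigl(\max(h(n),h(n'))\bigr)$. But $h(n)\to\infty$, so for fixed $m$ and all but finitely many $n$, the conclusion of Lemma \ref{lem1} is the vacuous assertion $0=0$; the diagonal extraction gives nothing. The paper's construction avoids this by an index interleaving: it applies Lemma \ref{lem2} to the lift of $T/p^{n^2}T$ rather than $T/p^nT$, defines $A_n:=\M_{n^2}^{\mfc(h(n^2)),\,n^2-\mfc(h(n^2))}$ over $\gs_{y_n}$ with $y_n=n^2-2\mfc(h(n^2))$, and then compares only \emph{consecutive} $A_n$, $A_{n+1}$. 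Their heights are bounded by $\log_{16}(n+1)$, and the torsion level $y_n\sim n^2$ dwarfs $\mfc(\lceil\log_{16}(n+1)\rceil)=O(\sqrt{n}(\log n)^3)$, so Lemma \ref{lem1} yields the nontrivial compatibility $p\M_{n+1}'=\M_n'$ for $\M_n':=p^{y_n-n}A_n$. That compatibility is what makes $\varprojlim \M_n'$ a free $\gs$-module of rank $d$; your version does not produce a compatible tower.

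Second, and more importantly, you explicitly flag that ``the main obstacle is then to show that $\mathfrak M$ has finite $E(u)$-height'' and then only gesture at a mechanism (``the resolution must exploit the $(\varphi,\hat G)$-data'') without supplying an argument. This is precisely the new content of the paper, and it is where the log-growth hypothesis is really used, not merely in making $\mfc(h(n))=o(n)$. The paper's argument (Lemma \ref{lem: E(u)}) is a determinant computation: from the mod $p^n$ congruences $\det A \cdot \det B_n \equiv E(u)^{dh(n^2)} \pmod{p^n}$ one must extract an exact identity $\det A\cdot\gamma=E(u)^s$ for some bounded $s$. This is carried out by repeated use of the division result \cite[Lem.\ 4.2.2]{Liu07} (Corollary \ref{cor conv} here), which \emph{halves} the $p$-adic precision each time one peels off a factor of $E(u)$; setting $n=16^{dm}$ so that $h(n^2)\leq m$ is exactly what makes the exponential precision loss (from $16^{dm'}$ down to $2^{3dm'}$) still overshoot the target precision $16^{dm}$. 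The $(\varphi,\hat G)$-structure plays no role in this step; the finite-height statement is proved purely at the level of Kisin modules via the determinant, and the growth bound $h(n)<\tfrac{1}{2d}\log_{16}n$ is tuned to this halving phenomenon. Without this lemma the proof does not close, and it is not something one can wave at by appealing to the semi-stable lattices $\hat T^*(\hat{\mathfrak N}_{n_k})$, whose Hodge--Tate weights go to infinity and give no a priori bound on $s$.
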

\begin{proof}
Suppose $\hat M$ is the \'etale $(\varphi, \tau)$-module associated to $T$.
For each $n \geq 1$, since $T/p^nT$ is torsion semi-stable (resp. crystalline), let $\hat{\M}_n$ be a $(\varphi, \hat G)$-model of $\hat M/p^n \hat M$ associated to a loose semi-stable (resp. crystalline) lift of $T/p^nT$.

Denote $y_n =n^2 - 2\mfc(h(n^2)).$
It is easy to check that there exists some $n_0$ such that when $n \geq n_0$, we have:
\begin{itemize}
\item $ \mfc(h(n)) < \sqrt{n}$, which implies that $y_n >0$ and $y_{n+1} -y_n >0$;
\item and $y_n-n  > \mfc(\lceil \log_{16} (n+1) \rceil)$, where $\lceil \cdot \rceil$ is the ceiling function.
%$$ \mfc(h(n)) < \sqrt{n}, \quad y_{n+1} -y_n >0, \quad  y_n-n  > \mfc(\lceil \log_{16} (n+1) \rceil), $$ where $\lceil \cdot \rceil$ is the ceiling function.
\end{itemize}
Now, for any $n \geq n_0$, let
$$A_n:= \M_{n^2}^{ \mfc(h(n^2)) ,\quad n^2- \mfc(h(n^2)) } .$$
By Lemma \ref{lem2} (note that $A_n$ is a Kisin model of $M/p^{y_n}M$), $A_n$ is a Kisin module finite free over $\mfS_{y_n}$ (of rank $d$) of height bounded by $h(n^2)$.
Let $\M_n':= p^{ y_n-n}A_n$, then it is finite free over $\mfS_n$ of height bounded by $h(n^2)$.
Now we claim that $p\M_{n+1}'=\M_n', \forall n \geq n_0$.

To show the claim, consider $A_n$ and $p^{y_{n+1} -y_n}A_{n+1}$, both are finite free over $\mfS_{y_n}$ (and both are models of $M/p^{y_n}M$), with heights bounded by  $\log_{16} (n+1) $ (because $\max\{h(n^2), h((n+1)^2) \}  < \log_{16} (n+1) $). 
So by Lemma \ref{lem1}, we have
$$p^{\mfc(\lceil \log_{16} (n+1) \rceil)  } A_n   = p^{\mfc(\lceil \log_{16} (n+1) \rceil)  }p^{y_{n+1} -y_n}A_{n+1} .$$
Multiply both sides with $ p^{ y_n-n  - \mfc(\lceil \log_{16} (n+1) \rceil)}$, we get $p\M_{n+1}'=\M_n'$.

Now, define $\wt{\mathfrak M}: =\varprojlim_{n \geq n_0} \M_n'$, then it is a finite free $\gs$-module of rank $d$, and there is a natural $\varphi$-action on it. We claim that
\begin{itemize}
  \item $\wt{\mathfrak M}$ is a Kisin module, i.e., it is of finite $E(u)$-height.
\end{itemize}
To prove the claim, pick any $\gs$-basis of $\wt{\mathfrak M}$, and consider the matrix $A$ of $\varphi$ with respect to the basis. It is sufficient to show that there exists $b \in \gs$ such that $(\det A)\cdot b =E(u)^s$ for some $s$. Note that for each $n \geq n_0$, there exists some $B_n \in \Mat_{d}(\gs)$ such that $AB_n =E(u)^{h(n^2)}\textnormal{Id}  \pmod{p^n}$ (where $\textnormal{Id} $ is the identity matrix), and so
$$\det A \cdot \det B_n = E(u)^{dh(n^2)} \pmod{p^n}.$$
We then conclude by Lemma \ref{lem: E(u)} below.

Next we show that we can upgrade $\wt{\mathfrak M}$ to a $(\varphi, \hat G)$-module. The strategy is the quite similar to what is done in \cite[\S 5, 6, 7, 8]{Liu07}. However, because of the work \cite{Liu10} (which substantially used the results in \cite[\S 5, 6, 7, 8]{Liu07}), it is much easier now.

As we have shown that $\wt{\mathfrak M}$ is a Kisin module, it is obvious that ${T}_{\gs}^{*}({\wt{\mathfrak M}}) = T|_{G_{\infty}} $, and so $\wt{\mathfrak M}$ is a Kisin model of $M$. Consider the $\hat G$-action on
$$W(F_{\tau})\otimes_{\varphi, W(F_{\tau})}\hat M=W(F_{\tau})\otimes_{\varphi, \gs} \wt{\mathfrak M}.$$
By Lemma \ref{lem: mij lift}, all the modules
$$\hat{\M}_n' =\t{Ker}  (p ^{\mfc(h(n^2))+y_n-n} \hat{\M}_{n^2} \overset{p ^{n}}{\longrightarrow} p^{n^2- \mfc(h(n^2))} \hat{\M}_{n^2} )$$
are also torsion semi-stable, and so the $\hat G$-actions on $W(F_{\tau})\otimes_{\varphi, \gs} {\mathfrak{M}_n'}$ descends to $\hat G$-actions on $\hat{\mathcal R} \otimes_{\varphi, \gs}  {\mathfrak{M}_n'}$. By taking inverse limit, the $\hat G$-action on $W(F_{\tau})\otimes_{\varphi, \gs} \wt{\mathfrak M}$ descends to a $\hat G$-action on $\hat{\mathcal R} \otimes_{\varphi, \gs} \wt{\mathfrak M}$, and so $(\wt{\mathfrak M}, \varphi, \hat G)$ is a $(\varphi, \hat G)$-module.
Now it is obvious that $\hat{T}^{*}(\hat{\wt{\mathfrak M}}) = T $, and so $T$ is semi-stable.

Now we only need to deal with the crystalline case. When the conditions in Lemma \ref{Gao15 crys} is satisfied (that is, when $p>2$, or when $p=2$ and $K_{\infty} \cap K_{p^{\infty}}=K$), then the $\hat G$-actions on $\hat{\mathcal R} \otimes_{\varphi, \gs}  {\mathfrak{M}_n'}$ satisfy the (torsion version of the) conclusion in \textit{loc. cit.}, and so the $\hat G$-action on $\hat{\wt{\mathfrak M}}$ satisfies the conclusion in \textit{loc. cit.} as well (note that $u^p\varphi(\mathfrak t)W(R)$ is $p$-adically closed in $W(R)$), and so $\hat{\wt{\mathfrak M}}$ is crystalline.

When $p=2$ and $K_{\infty} \cap K_{p^{\infty}} \neq K$, then we can argue similarly as in the very final paragraph of \cite{Liu10} (which is the errata for \cite{Liu07}). Namely, we can show that $T$ is crystalline over both $K(\pi_1)$ and $K(\mu_4)$, and so $T$ is crystalline over $K(\pi_1)\cap K(\mu_4)=K$.
\end{proof}

\begin{lemma} \label{lem: E(u)}
Let $\alpha \in \gs$, suppose there exists some $n_0$ such that for any $n \geq n_0$, there exists $\beta_n \in \gs$ such that
$$\alpha \beta_n = E(u)^{dh(n^2)}  \pmod{p^n \gs},$$
where $d$ and $h(n^2)$ are as in Theorem \ref{thm main}. Then there exist some $s \in \mathbb Z^{\geq 0}$ and $\gamma \in \gs$ such that $\alpha \gamma =E(u)^s$.
\end{lemma}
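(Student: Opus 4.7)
The plan is to apply Weierstrass preparation to $\alpha$ and rule out any irreducible divisor other than $E(u)$ by evaluating at one of its roots. First I would eliminate the degenerate cases $\alpha = 0$ and $p \mid \alpha$: reducing the hypothesis modulo $p$ and using $E(u) \equiv u^e \pmod p$ yields $\bar\alpha \bar\beta_n \equiv u^{edh(n^2)}$ in $k[[u]]$, a nonzero element for every $n$, so $\bar\alpha \neq 0$ and hence $\alpha \notin p\mathfrak{S}$. By the Weierstrass preparation theorem, write $\alpha = U \cdot P$ with $U \in \mathfrak{S}^\times$ and $P \in W(k)[u]$ a distinguished polynomial, and factor $P = \prod_i P_i^{m_i}$ into irreducible distinguished polynomials. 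Note that $E(u)$ is itself Eisenstein, hence irreducible and distinguished, so it suffices to show every $P_i$ equals $E(u)$; then $P = E(u)^s$ with $s = \sum m_i$, and $\gamma := U^{-1}$ gives $\alpha\gamma = E(u)^s$.

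Suppose for contradiction that $P_i \neq E(u)$ for some $i$. Pick a root $\pi_i \in \overline{K}$ of $P_i$; since the Newton polygon of a distinguished polynomial has only positive slopes, $v_p(\pi_i) > 0$, and the evaluation $f(u) \mapsto f(\pi_i)$ defines a continuous ring homomorphism $\mathfrak{S} \to \mathcal{O}_{\overline{K}}$ sending $p^n\mathfrak{S}$ into $p^n\mathcal{O}_{\overline{K}}$. Because $P_i \mid \alpha$, we have $\alpha(\pi_i) = 0$, so evaluating the congruence $\alpha\beta_n \equiv E(u)^{dh(n^2)} \pmod{p^n}$ at $u = \pi_i$ gives $E(\pi_i)^{dh(n^2)} \equiv 0 \pmod{p^n\mathcal{O}_{\overline{K}}}$ for all $n \gg 0$. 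Since $P_i$ and $E(u)$ are distinct monic irreducibles in $K_0[u]$, they are coprime there, so $E(\pi_i) \neq 0$; applying the Newton-polygon argument to $E(u)$ gives $c := v_p(E(\pi_i))$ positive and finite. Hence $c \cdot d \cdot h(n^2) \geq n$ for all large $n$, i.e.\ $h(n^2) \geq n/(cd)$, contradicting the hypothesis $h(n^2) < \frac{1}{2d}\log_{16}(n^2) = \frac{1}{d}\log_{16} n$ since $n/\log n \to \infty$.

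The key insight, and the only nontrivial step to spot, is the evaluation trick: it converts the $\mathfrak{S}/p^n$-congruence into a concrete $p$-adic divisibility in $\mathcal{O}_{\overline{K}}$, so that the asymptotic gap between linear growth in $n$ and the permitted logarithmic growth of $h(n^2)$ becomes the decisive contradiction. Once this is in place, the remainder is standard Weierstrass preparation together with the Newton-polygon observation that roots of distinguished polynomials lie in the maximal ideal of $\mathcal{O}_{\overline{K}}$.
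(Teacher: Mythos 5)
Your proof is correct, and it takes a genuinely different route from the paper's. The paper argues directly in the quotients $\gs_n=\gs/p^n\gs$: it first shows $u\nmid\alpha$ via the map $u\mapsto 0$, then shows that the maximal $E(u)$-power dividing $\alpha$ in $\gs_n$ is bounded by some $s$, and finally constructs $\gamma_n\in\gs_n$ with $\alpha\gamma_n\equiv E(u)^s$ through a delicate $E(u)$-divisibility bookkeeping built on Corollary~\ref{cor conv} (itself from \cite[Lem.~4.2.2]{Liu07}), passing to the inverse limit at the end. You instead apply Weierstrass preparation to write $\alpha=U\cdot P$ and show $P=E(u)^s$ by evaluating the congruence at a root $\pi_i$ of any putative irreducible factor $P_i\neq E(u)$: this converts the congruence in $\gs/p^n\gs$ into the valuation inequality $c\,d\,h(n^2)\ge n$ with $c=v_p(E(\pi_i))$ a fixed positive constant depending only on $\alpha$, which the logarithmic growth of $h$ contradicts immediately. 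Your approach thus replaces the combinatorial lemmas on $E(u)$-division mod $p^n$ by the structure theorem for $W(k)[[u]]$ together with a Newton-polygon observation, and it also makes transparent that for this particular lemma any \emph{sublinear} growth of $h(n^2)$ would suffice, since $c$ is independent of $n$. Two details worth spelling out if you write this up: monic factors of a distinguished polynomial are again distinguished (reduce mod $p$ and note they must be powers of $u$), so the irreducible factorization you invoke is available; and the evaluation $f\mapsto f(\pi_i)$ converges in the $p$-adically complete DVR $\O_{K_0(\pi_i)}\subset\O_{\overline K}$, which is what makes it a well-defined continuous ring homomorphism carrying $p^n\gs$ into $p^n\O_{\overline K}$.
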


Before we prove the lemma, we recall a useful lemma. Note that $E(u)$ is not a zero divisor in $\gs_n, \forall n \geq 0$, so it is OK to do ``division by $E(u)$" in $\gs_n$.
\begin{lemma}[{\cite[Lem. 4.2.2]{Liu07}}]
Suppose $f, g \in \gs_n$ with $n\geq 2$, suppose $E(u)| fg \pmod{p^n}$. Then we have
 $$E(u)| f \pmod p^{\lfloor \frac n 2 \rfloor} \textnormal{ or } E(u)| g \pmod{p^{\lfloor \frac n 2 \rfloor}}, $$
where $\lfloor \cdot \rfloor$ is the floor function.
\end{lemma}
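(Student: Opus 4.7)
The plan is to pass to the quotient by $E(u)$ and exploit the identification $\gs/E(u)\gs \cong \mathcal O_K$ that makes $E(u)$ behave like a prime element. Concretely, reducing modulo $(E(u),p^n)$ gives the ring $\gs/(E(u),p^n) \cong \mathcal O_K/p^n\mathcal O_K$. Let $\bar f, \bar g$ denote the images of $f, g$ in this quotient; the hypothesis $E(u)\mid fg \pmod{p^n}$ says precisely that $\bar f\,\bar g = 0$ in $\mathcal O_K/p^n\mathcal O_K$.

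The key observation is that $\mathcal O_K/p^n\mathcal O_K$ is a truncated discrete valuation ring: since $\mathcal O_K$ is a DVR with uniformizer $\pi$ and $p = (\text{unit})\cdot \pi^e$, we have $p^n\mathcal O_K = \pi^{en}\mathcal O_K$, and every nonzero element of $\mathcal O_K/p^n\mathcal O_K$ has the form $\pi^k\cdot(\text{unit})$ with $0\le k < en$. Setting $a := v_\pi(\bar f)$ and $b := v_\pi(\bar g)$ with the convention $v_\pi(0) = en$, the vanishing $\bar f\,\bar g = 0$ forces $a+b \geq en$. Therefore $\max(a,b) \geq \lceil en/2\rceil \geq e\lfloor n/2\rfloor$. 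Without loss of generality $a \geq e\lfloor n/2\rfloor$, which means $\bar f \in \pi^{e\lfloor n/2\rfloor}(\mathcal O_K/p^n\mathcal O_K) = p^{\lfloor n/2\rfloor}(\mathcal O_K/p^n\mathcal O_K)$. Lifting this back to $\gs$, we obtain $f \in (E(u),\, p^{\lfloor n/2\rfloor})\subset \gs$, which is exactly the assertion $E(u)\mid f \pmod{p^{\lfloor n/2\rfloor}}$.

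There is no serious obstacle here; the only thing to verify with care is the elementary inequality $\lceil en/2\rceil \geq e\lfloor n/2\rfloor$ (both for even and odd $n$), which ensures that the $\pi$-adic bound $\max(a,b)\geq en/2$ actually translates into a $p$-adic divisibility at the claimed exponent $\lfloor n/2\rfloor$, and the routine bookkeeping to lift the conclusion from $\mathcal O_K/p^n\mathcal O_K$ back to $\gs$.
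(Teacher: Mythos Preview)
Your argument is correct. The identification $\gs/E(u)\gs \cong \mathcal O_K$ (via $u\mapsto\pi$, using that $E(u)$ is monic so Weierstrass division applies) reduces the statement to the elementary fact that in the truncated DVR $\mathcal O_K/\pi^{en}\mathcal O_K$ a product vanishes only if the $\pi$-valuations of the factors sum to at least $en$; the inequality $\lceil en/2\rceil \ge e\lfloor n/2\rfloor$ then gives the claim, and the lift back to $\gs$ is exactly as you wrote since $p^n\in p^{\lfloor n/2\rfloor}\gs$.

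Note that the paper does not supply its own proof of this lemma: it simply quotes \cite[Lem.~4.2.2]{Liu07}. Your argument is essentially the natural one and is in the same spirit as the proof in \loccit, which also hinges on the isomorphism $\gs/E(u)\gs \cong \mathcal O_K$ to transfer the divisibility question into a valuation estimate in a DVR.
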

The following easy corollary is convenient for our use.
\begin{cor} \label{cor conv}
Suppose $f, g \in \gs_{2^n}$. Suppose $E(u)^k | fg  \pmod{p^{2^n}}$ where $k<n$. Then we will have
 $$E(u)^a | f \pmod{p^{2^{n-k}}}, \textnormal{ and } E(u)^b| g \pmod{p^{2^{n-k}}}$$
 for some $a, b \geq 0$ such that $a+b=k$.
\end{cor}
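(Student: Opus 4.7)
The plan is to prove this by induction on $k$. The base case $k=0$ is trivial (take $a=b=0$). For the inductive step, assume the result for $k-1$ with any compatible $n$, and consider $f, g \in \gs_{2^n}$ with $E(u)^k \mid fg \pmod{p^{2^n}}$ and $1 \le k < n$. In particular $E(u)$ divides $fg$ modulo $p^{2^n}$, so the preceding lemma (Liu's \cite[Lem. 4.2.2]{Liu07}) applied in $\gs_{2^n}$ yields that $E(u) \mid f \pmod{p^{2^{n-1}}}$ or $E(u) \mid g \pmod{p^{2^{n-1}}}$.

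By symmetry, suppose the first case holds, and write $f \equiv E(u) f_1 \pmod{p^{2^{n-1}}}$ for some $f_1 \in \gs$. Reducing the relation $E(u)^k \mid fg \pmod{p^{2^n}}$ modulo $p^{2^{n-1}}$ gives
$$E(u) f_1 g \equiv E(u)^k w \pmod{p^{2^{n-1}}}$$
for some $w \in \gs$. Since $E(u)$ is not a zero divisor in $\gs_{2^{n-1}}$ (as noted just before the previous lemma), we may cancel one factor of $E(u)$ to obtain $E(u)^{k-1} \mid f_1 g \pmod{p^{2^{n-1}}}$. Since $k-1 < n-1$, the induction hypothesis (applied in $\gs_{2^{n-1}}$) gives $a', b \ge 0$ with $a' + b = k-1$ such that $E(u)^{a'} \mid f_1 \pmod{p^{2^{(n-1)-(k-1)}}}$ and $E(u)^b \mid g \pmod{p^{2^{n-k}}}$, where $2^{(n-1)-(k-1)} = 2^{n-k}$.

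Finally, setting $a := a'+1$ and using that $2^{n-k} \le 2^{n-1}$ (since $k \ge 1$), the congruence $f \equiv E(u) f_1 \pmod{p^{2^{n-1}}}$ passes to $f \equiv E(u) f_1 \pmod{p^{2^{n-k}}}$, so $E(u)^a \mid f \pmod{p^{2^{n-k}}}$, with $a + b = k$. The only subtle point is bookkeeping: the mod-$p$ precision is halved at each invocation of the preceding lemma, which matches exactly the drop from $2^{n-k}$ to $2^{(n-1)-(k-1)}$ required by the inductive hypothesis, so no precision is wasted.
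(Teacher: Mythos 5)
Your induction on $k$ is correct, and the bookkeeping of the $p$-adic precision is exactly right: the floor-halving in \cite[Lem.~4.2.2]{Liu07} at each of the $k$ applications is what dictates the power-of-two form $2^n \to 2^{n-k}$ in the statement. The paper gives no proof of this corollary (it is labelled ``easy'' and left to the reader), and the argument you give is the intended one: peel off one factor of $E(u)$ via Liu's lemma, cancel it (legitimate since $E(u)$ is a non-zero-divisor in each $\gs_m$, as noted just before the lemma), and recurse at half the precision. The only points worth spelling out, which you handle correctly, are (i) that $k<n$ guarantees $k-1 < n-1$ so the inductive hypothesis applies, and (ii) that the final replacement $f \equiv E(u)f_1 \pmod{p^{2^{n-1}}}$ can be weakened to modulus $p^{2^{n-k}}$ because $k\ge 1$.
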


\begin{proof}[Proof of Lemma \ref{lem: E(u)}]
First we have $u \nmid \alpha$. This is because when $n$ is big enough, $f_0(\alpha \beta_n)= f_0(E(u)^{dh(n^2)} +p^n \theta_n )$ for some $\theta_n \in \gs$, and the right hand side is $\neq 0$, because $dh(n^2) <n$. Here $f_0$ is the $W(k)$-linear map $\gs \to W(k)$ with $f_0(u)=0$.

Next, suppose $E(u)^{x_n} |\alpha $ in $\gs_n$. Then we claim that there exists $s$ such that $x_n \leq s, \forall n \geq n_0$.
To prove the claim, write $\alpha=E(u)^{x_n}\theta_{1, n} + p^n \theta_{2, n}$ for some $\theta_{1, n}, \theta_{2, n} \in \gs, \forall n \geq n_0$. Since $f_0(\alpha) \neq 0$ and $p | f_0(E(u)) $, it is easy to see that the sequence $\{x_n\}$ has to be bounded.

Finally, we claim that for all $n >> 0$, there exists $\gamma_n \in \gs_n$ such that $\alpha \gamma_n=E(u)^s \pmod{p^n}$ (note that such $\gamma_n$ is unique). We only need to show existence of such $\gamma_n$ for a sequence $\{n_m\}$ going to infinity.

For all $m> \textnormal{max} \{s, n_0\}$, consider $n=16^{dm}$, so $h(n^2) \leq m$.
We can and do assume that
$\alpha \beta_n =E(u)^{dm} \pmod{p^{16^{dm}}}$ (when $h(n^2)<m$, we can simply multiply some $E(u)$-power to $\beta_n$, and it does not affect our result). We want to show that there exists $\gamma_n \in \gs_n$ such that $\alpha \gamma_n=E(u)^s \pmod{p^{16^{dm}}}$.

Take any $m'>2m$, and let $n'=16^{dm'}$, so we have $\alpha \beta_{n'} =E(u)^{dm'} \pmod{p^{16^{dm'}}}$.
Apply Corollary \ref{cor conv}, then we will have
$$E(u)^a |  \alpha \pmod{p^{2^{4dm'-dm'}}} \textnormal{ and }  E(u)^b | \beta_{n'} \pmod{p^{2^{4dm'-dm'}}}$$
where $a+b=dm'$. However, we always have $a \leq s$, and so $b \geq dm'-s$ (and $dm'-s>0$ because $m>s$).
That is, we now have (note that $2^{4dm'-dm'} >16^{dm}$ ),
$$\alpha \beta_{n'} =E(u)^{dm'} \pmod{p^{16^{dm}}} \textnormal{ and } E(u)^{dm'-s} \mid \beta_{n'}  \pmod{p^{16^{dm}}}.$$ 
So we can simply let $\gamma_n = \frac{\beta_{n'}}{E(u)^{dm'-s} } $.

Now simply let $\gamma:=\varprojlim_{n >>0} \gamma_n$, and we are done.

\end{proof}

\bibliographystyle{alpha}

\end{document}